% interactnlmsample.tex
% v1.05 - August 2017

\documentclass[]{interact}

\usepackage{epstopdf}% To incorporate .eps illustrations using PDFLaTeX, etc.
\usepackage[caption=false]{subfig}% Support for small, `sub' figures and tables

\usepackage[numbers,sort&compress]{natbib}% Citation support using natbib.sty

\usepackage{color}
\usepackage{xcolor}
\usepackage[makeroom]{cancel}
\usepackage{soul}
\definecolor{mypink1}{rgb}{1.00,0.10,1.00}

\bibpunct[, ]{[}{]}{,}{n}{,}{,}% Citation support using natbib.sty
% Bibliography support using natbib.sty
\makeatletter% @ becomes a letter
\def\NAT@def@citea{\def\@citea{\NAT@separator}}% Suppress spaces between citations using natbib.sty
\makeatother% @ becomes a symbol again

\theoremstyle{plain}% Theorem-like structures provided by amsthm.sty
\newtheorem{theorem}{Theorem}[section]

\newtheorem{corollary}[theorem]{Corollary}
\newtheorem{proposition}[theorem]{Proposition}
\newtheorem{fact}[theorem]{Fact}

\theoremstyle{definition}
\newtheorem{definition}[theorem]{Definition}
\newtheorem{example}[theorem]{Example}
\newtheorem{lemma}[theorem]{Lemma}

\theoremstyle{remark}
\newtheorem{remark}{Remark}

\newtheorem{problem}{Problem}

\newcommand\cC{{{\mathcal C}}}

\newcommand\cK{{{\mathcal K}}}

\newcommand\R{\mathbb R}

\newcommand\cero{0_X}

\newcommand\normx{\|x \|}
\newcommand\normf{\| f \|_*}

\begin{document}

%\articletype{ARTICLE TEMPLATE}% Specify the article type or omit as appropriate

\title{Existence and Density Theorems of  Henig Global Proper Efficient Points}

\author{
\name{Fernando García-Castaño\thanks{Fernando García-Castaño  https://orcid.org/0000-0002-8352-8235    Email:fernando.gc@ua.es},  Miguel Ángel Melguizo-Padial\thanks{Miguel Ángel Melguizo-Padial  https://orcid.org/0000-0003-0303-791X   Email:ma.mp@ua.es}}
\affil{Department of Mathematics, University of Alicante,  03080,  San Vicente del Raspeig,  Alicante,  Spain}
}

\maketitle

\begin{abstract}
In this work, we provide some novel results that establish both the existence of Henig global proper efficient points and their density in the efficient set for vector optimization problems in arbitrary normed spaces. Our results do not require the assumption of convexity, and in certain cases,  can be applied to unbounded sets. However, it is important to note that a weak compactness condition on the set (or on a section of it) and a separation property between the order cone and its conical neighborhoods remains necessary. The weak compactness condition ensures that certain convergence properties hold. The separation property enables the interpolation of a family of Bishop-Phelps cones between the order cone and each of its conic neighborhoods. This interpolation, combined with the proper handling of two distinct types of conic neighborhoods, plays a crucial role in the proofs of our results, which include as a particular case other results that have already been established under more restrictive conditions.
\end{abstract}

\begin{keywords}
Henig global proper efficient point; existence theorems; density theorems; cone separation property; nonconvex vector optimization
\end{keywords}

\begin{amscode}
 90C29;90C30;49J27;46N10
\end{amscode}

\section{Introduction}
Proper efficient points  were introduced to eliminate those efficient points that cannot be satisfactorily characterized by a scalar problem. This notion  was first introduced  in the  pioneering paper \cite{Geoffrion1968}, where  properness was defined in $\mathbb{R}^n$ with respect to the nonnegative octant. Later on,  many efforts were accomplished to refine this concept  in order to obtain more characterizations of these points by means of positive functionals. Most important refinements of efficient solutions are due to different authors as Benson, Geoffrion, Hartley, Henig, Borwein, and some others,  giving rise to a wide range of notions of proper efficiency.  In this paper, we focus  on the notion of Henig global proper efficiency. This type of proper efficiency was introduced and characterized  in \cite{Henig1982} by using a separation property given by a cone. This  characterization became useful  to unify and generalize the notions of proper efficiency previously established. Subsequently, Henig global proper efficiency has been generalized to infinite-dimensional spaces and it has been object of many investigations (see \cite{Zheng1997,Makarov1999,Gong2001,Gong2005,Qiu2010,Kasimbeyli2010,
Newhall2014,Kasimbeyli2015,Chen2018,Zhou2018,Kasimbeyli2019,Tung2020, Hai2021,GARCIACASTANO2022} and the references therein). One of the desired properties for proper efficient points is their density within the set of all efficient points of the problem. The first density result was established by
Arrow,  Barankin and Blackwell in \cite{ArrowBarankinBlackwell1953} showing that the set of positive proper efficient points of a compact subset of $\mathbb{R}^n$ is dense in the set of efficient points. This theorem has been widely generalized, see for example
\cite{Borwein1977,Hartley1978,Borwein1980,Henig1982,Jahn1988,Gong1995} and the references therein.  Density results for Henig proper efficient points in nonconvex sets can be found in \cite{Fu1996,Makarov1996,Bednarczuk1999,Gopfert2004,Kasimbeyli2016}.

In this paper, we utilize the separation property introduced in \cite{GARCIACASTANO2022} between the ordering cone and its conical neighbourhoods to establish sufficient conditions for the existence of Henig global proper efficient points in general normed spaces.  As is customary in existence results, we employ certain compactness conditions, specifically weak compactness. In particular, we present a theorem that requires the weak compactness of a section of the set, where the section is defined by a conical neighborhood. Additionally, we introduce another existence result that requires the weak compactness of the entire considered set. To establish this latter result, we combine the separation property with two types of conical neighborhoods, allowing us to leverage the convexity of the so-called Henig dilating cones. Furthermore, we extend our investigation to determine new nonconvex density results for Henig global proper minimal points in general normed spaces.  In the density results, we once again apply both the separation property and the weak compactness conditions from the existence results. However, in this case, we also require the ordering cone to be closed. Once again, we combine the use of the two types of conic neighborhoods to obtain the result that assumes the entire considered set is weakly compact.  Remarkably, our results do not rely on convexity and can be applied to unbounded sets in some cases. This fact highlights the generality and flexibility of our results, as they can be applied to a wide range of sets and problems beyond the traditional convex setting. By relaxing the convexity assumption, we open up possibilities for exploring optimization problems in non-convex domains, where traditional convex methods may not be applicable. Additionally, the relaxation of boundedness assumptions allows for the consideration of unbounded sets, which can arise in various practical scenarios. In addition, our results also enhance and expand upon the main results presented in \cite{Kasimbeyli2015,Kasimbeyli2016}. While those previous works assumed the reflexivity of the underlying space, our results do not require this additional assumption. This relaxation of reflexivity allows for the application of our results in a wider range of normed spaces, further increasing the practical relevance and generality of the obtained conclusions.

%In this paper, we apply the separation property introduced in \cite{GARCIACASTANO2022} to provide some sufficient conditions for the existence of Henig global proper efficient points in general normed spaces assuming either the weakly compactness of only one single section of the set or the weakly compactness of the whole set.  In addition, we also determine new nonconvex density results for Henig global proper minimal points in general normed spaces assuming either  the weakly compactness of  only one single section of the set containing the minimal point we are approximating  or the weakly compactness  of the whole set.  Our results do not require convexity or boundedness assumptions on the set and they enhance some main results in \cite{Kasimbeyli2015,Kasimbeyli2016} established for reflexive Banach spaces.
The paper is organized as follows. In Section \ref{Notation}, we establish the general terminology and notation to be used throughout the work.  In Section~\ref{sec:tmas_separacion_conos}, we present novel results concerning the separation property initially introduced in \cite{GARCIACASTANO2022}. We explore two distinct types of dilating cones and provide a characterization of the separation property for each one. Our results demonstrate the possibility of interpolating Bishop-Phelps cones between the ordering cone and the corresponding conic dilation (or conic neighborhood). Specifically, we establish Theorems \ref{thm:tma_separacion_epsilon_conic_neighbourhood} and \ref{thm:tma_separacion_Henig_dilating_cone}. The use of $\epsilon$-conic neighborhoods allows for a more simplified statement of the main results in Section \ref{sec:exist_henig}. Additionally, the utilization of Henig dilating cones, leveraging their convexity, allows us to derive results by assuming the weak compactness of the entire set involved in the vector optimization problem. In Remark \ref{rem:resumen_existencia_conos}, we summarize several properties that highlight the interplay between these two types of dilating cones and the separation property. This remark proves particularly valuable in the subsequent section. In Section \ref{sec:exist_henig}, we present the main results of the paper. Theorem \ref{thm:Existence_He_Hartley_compact_nuevo} establishes a sufficient condition for the existence of Henig global proper efficient points, assuming the weak compactness of a certain section of the set given by a conic neighbourhood. Theorem~\ref{thm:Existence_GHe_con_Henig_dilating_cones_no_reflexivos} is a variant of Theorem~\ref{thm:Existence_He_Hartley_compact_nuevo} specifically tailored for Henig dilating cones, leading to Theorem \ref{thm:Existence_He_Cjto_compactbis}, which guarantees the existence of Henig global proper minimal points in weakly compact sets. The latter part of this section is devoted to establishing density results akin to the theorem by Arrow, Barankin, and Blackwell. Theorem \ref{thm:arrow_barankin_blackwell_local_approximation} presents our first density result, providing a local approximating theorem under the assumption that an arbitrary section containing the efficient point is weakly compact. Finally, Theorem \ref{thm:arrow_barankin_blackwell_A_weak_compact} establishes the density of Henig global proper efficient points in the set of minimal points for weakly compact sets.

\section{Notation and preliminaries}\label{Notation}
Throughout the paper, we use the following notation in the context of a normed space $X$. Let $\|\cdot\|$ denote the norm on $X$, $X^*$ the dual space of $X$, $\|\cdot\|_*$ the norm on $X^*$, and $0_X$ the origin of $X$. By $B_X$ (resp. $B_X^\circ$), we denote the closed (resp. open) unit ball of $X$, and by $S_X$ we denote the unit sphere, i.e., $B_X:=\{x \in X \colon \|x\| \leq 1\}$, $B_X^\circ:=\{x \in X \colon \|x\| < 1\}$, and $S_X:=\{x \in X \colon \|x\| = 1\}$, respectively. Given $x \in X$ and $r > 0$, we define $B(x,r):=\{y \in X \colon \|x-y\| \leq r\}$ and $B^\circ(x,r):=\{y \in X \colon \|x-y\| < r\}$. For a subset $A \subset X$, we denote by $\overline{A}$ (resp. bd($A$), int($A$), co($A$), $\overline{\text{co}}(A)$) the closure (resp. the boundary, the interior, the convex hull, the closure of the convex hull) of $A$. By $\mathbb{R}_+$ (resp. $\mathbb{R}_{++}$, $\mathbb{N}$), we denote the set of nonnegative real numbers (resp. strictly positive real numbers, natural numbers). A subset $\mathcal{C} \subset X$ is called a cone if $\lambda x \in \mathcal{C}$ for every $\lambda \geq 0$ and $x \in \mathcal{C}$. Let $\mathcal{C} \subset X$ be a cone: $\mathcal{C}$ is said to be nontrivial if $\{\cero\} \subsetneq \mathcal{C} \subsetneq X$, $\mathcal{C}$ is said to be convex if $\cC+\cC=\cC$, and $\mathcal{C}$ is said to be pointed if $(-\mathcal{C}) \cap \mathcal{C} = \{\cero\}$. All cones in this manuscript are assumed to be nontrivial unless stated otherwise.

Given a subset $A \subset X$, we define the cone generated by $A$ as cone$(A):=\{\lambda s \colon \lambda \geq 0,\, s \in A\}$, and $\overline{\text{cone}}(A)$ stands for the closure of cone$(A)$. A nonempty convex subset $B$ of a convex cone $\mathcal{C}$ is said to be a base for $\mathcal{C}$ if $\cero \notin \overline{B}$, and for every $x \in \mathcal{C} \setminus \{\cero\}$, there exist unique $\lambda_x > 0$ and $b_x \in B$ such that $x = \lambda_x b_x$. Given a cone $\mathcal{C} \subset X$, its dual cone is defined by $\mathcal{C}^*:=\{f \in X^* \colon f(x) \geq 0,\, \forall x \in \mathcal{C}\}$, and the quasi-relative interior of $\mathcal{C}^*$ by $\mathcal{C}^\#:=\{f \in X^* \colon f(x) > 0,\, \forall x \in \mathcal{C},\, x \neq \cero\}$. The set $\mathcal{C}^\#$ is also called the set of strictly positive functionals. A convex cone $\mathcal{C} \subset X$ has a base if and only if $\mathcal{C}^\# \neq \emptyset$, and the latter implies that $\mathcal{C}$ is pointed. In particular, for every $f \in \mathcal{C}^\#$, the set $B:=\{x \in \mathcal{C} \colon f(x) = 1\}$ is a base for $\mathcal{C}$. A convex cone $\mathcal{C}$ is said to have a bounded base if there exists a base $B$ for $\mathcal{C}$ that is a bounded subset of $X$.  We refer the reader to \cite{Jahn2004} for the previously mentioned facts.  In \cite{Kasimbeyli2010}, the augmented dual cones were defined as $\mathcal{C}^{a*}:=\{(f,\alpha) \in \mathcal{C}^\# \times \mathbb{R}_+ \colon f(x) - \alpha\|x\| \geq 0,\, \forall x \in \mathcal{C}\}$ and $\mathcal{C}^{a\#}:=\{(f,\alpha) \in \mathcal{C}^\# \times \mathbb{R}_+ \colon f(x) - \alpha\|x\| > 0,\, \forall x \in \mathcal{C},\, x \neq \cero\}$. In \cite{GARCIACASTANO2022}, the following augmented dual cones were also introduced: $\mathcal{C}^{a*}_+:=\{(f,\alpha) \in \mathcal{C}^\# \times \mathbb{R}_{++} \colon f(x) - \alpha\|x\| \geq 0,\, \forall x \in \mathcal{C}\}$ and $\mathcal{C}^{a\#}_+:=\{(f,\alpha) \in \mathcal{C}^\# \times \mathbb{R}_{++} \colon f(x) - \alpha\|x\| > 0,\, \forall x \in \mathcal{C},\, x \neq \cero\}$.  In addition, the sublevel set corresponding to $f \in X^*$ and $\alpha \geq 0$ is defined by $S(f,\alpha):=\{x \in X \colon f(x) + \alpha\|x\| \leq 0\}$. 

On the other hand,  any pointed convex cone $\mathcal{C} \subset X$ provides a partial order on $X$, denoted by $\leq$, through the relationship $x \leq y \Leftrightarrow y - x \in \mathcal{C}$. In this setting, we say that $X$ is a partially ordered normed space, and $\mathcal{C}$ is the ordering cone. Given a subset $A \subset X$, we say that $x_0 \in A$ is an efficient (or Pareto minimal) point of $A$, written $x_0 \in \text{Min}(A,\mathcal{C})$, if $(A - x_0) \cap (-\mathcal{C}) = \{\cero\}$. On the other hand, we say that $x_0 \in A$ is a Henig global proper efficient point of $A$, written $x_0 \in \text{GHe}(A,\mathcal{C})$, if $x_0 \in \text{Min}(A,\mathcal{K})$ for some convex cone $\mathcal{K}$ such that $\mathcal{C} \setminus \{\cero\} \subset \text{int}(\mathcal{K})$. In the last section of this paper, we provide conditions under which the set $\text{GHe}(A,\mathcal{C})$ is non-empty and dense in $\text{Min}(A,\mathcal{C})$.
\section{Separation theorems and dilating cones}\label{sec:tmas_separacion_conos}
The main objective of this section is to establish  new theorems that characterize when the ordering cone and its conical neighborhoods satisfy the separation property introduced in \cite{GARCIACASTANO2022}. We begin by introducing the separation property and some technical results before defining the two types of conical neighborhoods (or dilation cones) that we will use in this work. In what follows, for a normed space $X$ and two subsets $A$, $B\subset X$, we will write $A-B:=\{x-y\colon x\in A,\, y \in B\}$. In addition, to simplify notation, we will write $A_0$ instead of $A\cup \{\cero\}$.
\begin{definition}\label{def:estrict_separation}
Let $X$ be a normed space and $\cC$, $\cK$ cones on $X$. We say that the pair of cones $(\cC,\cK)$ has the strict separation property (SSP for short) if $\cero\not \in \overline{{\mbox{co}}(\cC\cap S_X)-\mbox{co}((\mbox{bd}(\cK)\cap S_X)_0)}$.
\end{definition}

The SSP depends on the pair of cones $(\cC,\cK)$ and the geometry unit sphere, $S_X$. The following example shows that the choice of the norm can be crucial for the SSP.

\begin{example}
Let us take $X=\mathbb{R}^2$ and  the  cones 
$\cC:=\{(x_1,x_2)\in \mathbb{R}^2: x_2 \geq \sqrt{3} | x_1|\}$ and 
$\cK:=\{(x_1,x_2)\in \mathbb{R}^2: x_2 \geq | x_1|\}$. Then $(\cC,\cK)$ has the SSP in $(\R^2,\|\cdot \|_2)$ but it does not enjoy the SSP in $(\R^2,\|\cdot \|_{\infty})$.
\end{example}
\begin{proof}
Now, first, considering the euclidean norm $\| (x_1,x_2) \|_2: =\sqrt{x_1^2+x_2^2}$ for any $(x_1,x_2)\in \mathbb{R}^2$. Then, we have that 
$\mbox{co}(\cC \cap S_X)= \{(x_1,x_2)\in \mathbb{R}^2: \sqrt{x_1^2+x_2^2}\leq 1, \,  x_2\geq \frac{\sqrt{3} }{2}\}$ and 
$ \mbox{co}((\mbox{bd}(\cK)\cap S_X)_0)=\cK \cap \{(x_1,x_2)\in \mathbb{R}^2: x_2\leq \frac{\sqrt{2} }{2} \}$. Consequently, $\cero\not \in \overline{{\mbox{co}}(\cC\cap S_X)-\mbox{co}((\mbox{bd}(\cK)\cap S_X)_0)}$.   Consider now  the norm $\| (x_1,x_2) \|_{\infty}: =\max\{|x_1|,|x_2|\}$ for any $(x_1,x_2)\in \mathbb{R}^2$. We have that 
%$$S_X=\{ (x_1,x_2)\in \mathbb{R}^2: x_1=1 \mbox{ or }      x_2=1\},$$ 
%$$S_X=\{(-1,\zeta): -1\leq \zeta \leq 1 \} \cup\{(1,\zeta): -1\leq \zeta \leq 1 \} \cup \{(\zeta,-1): -1\leq \zeta \leq 1 \} \cup\{(\zeta,1): -1\leq \zeta \leq 1 \},$$ 
%and then
$\mbox{co}(\cC \cap S_X)= \{(x_1,x_2)\in \mathbb{R}^2: x_1 \in [-\frac{1}{\sqrt{3}},\frac{1}{\sqrt{3}}], x_2=1   \}$ and $ \mbox{co}((\mbox{bd}(\cK)\cap S_X)_0)=\cK \cap \{(x_1,x_2)\in \mathbb{R}^2: x_2\leq 1 \}$. Consequently,  $\mbox{co}(\cC \cap S_X)\subset  \mbox{co}((\mbox{bd}(\cK)\cap S_X)_0),$ and then
$\cero  \in \overline{{\mbox{co}}(\cC\cap S_X)-\mbox{co}((\mbox{bd}(\cK)\cap S_X)_0)}.$

\end{proof}

Here we recall that given  $f \in  X^*$ and $0<\alpha <  \| f \|_* $, the  closed  convex cone
$$C(f,\alpha):=\{x \in X: f(x)- \alpha \| x\| \geq 0 \}  $$
is said to be a Bishop-Phelps cone. These kind of cones verify the following lemma.

\begin{lemma}\label{LemaBP}
Let $(X, \| \, \|)$ be a normed space,  $f \in S_{X^*}$, and   $0<\alpha <  1  $. The following statements hold. 
\begin{itemize}
\item[$(a)$] $ \mbox{co} (  C(f,\alpha) \cap S_X)\subset \{ f \geq \alpha \}$. 
\item[$(b)$] $ \mbox{co} ( (\mbox{bd}(C(f,\alpha))\cap S_X)_0) \subset \{ f \leq \alpha \}$.
\end{itemize}
\end{lemma}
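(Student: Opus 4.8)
\emph{Proof plan.} The plan is to derive both inclusions from the corresponding containments \emph{before} taking convex hulls, using that the half-spaces $\{f\ge\alpha\}:=\{x\in X\colon f(x)\ge\alpha\}$ and $\{f\le\alpha\}:=\{x\in X\colon f(x)\le\alpha\}$ are convex, so that the convex hull of any subset of one of them remains inside it.

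For $(a)$ I would argue directly: if $x\in C(f,\alpha)\cap S_X$, then, by the very definition of the Bishop--Phelps cone together with $\|x\|=1$, we have $f(x)\ge\alpha\|x\|=\alpha$; hence $C(f,\alpha)\cap S_X\subset\{f\ge\alpha\}$, and the convexity of $\{f\ge\alpha\}$ yields $\mbox{co}(C(f,\alpha)\cap S_X)\subset\{f\ge\alpha\}$.

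For $(b)$ the key step is to locate the boundary: I claim $\mbox{bd}(C(f,\alpha))\subset\{x\in X\colon f(x)=\alpha\|x\|\}$. Since $C(f,\alpha)$ is closed, $\mbox{bd}(C(f,\alpha))=C(f,\alpha)\setminus\mbox{int}(C(f,\alpha))$; and the set $U:=\{x\in X\colon f(x)-\alpha\|x\|>0\}$ is open, because $x\mapsto f(x)-\alpha\|x\|$ is continuous, with $U\subset C(f,\alpha)$, so $U\subset\mbox{int}(C(f,\alpha))$. Therefore $\mbox{bd}(C(f,\alpha))\subset C(f,\alpha)\setminus U=\{x\in X\colon f(x)=\alpha\|x\|\}$, proving the claim. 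In particular every $x\in\mbox{bd}(C(f,\alpha))\cap S_X$ satisfies $f(x)=\alpha\|x\|=\alpha$, and since $f(\cero)=0\le\alpha$ we obtain $(\mbox{bd}(C(f,\alpha))\cap S_X)_0\subset\{f\le\alpha\}$; taking convex hulls and using the convexity of $\{f\le\alpha\}$ gives $(b)$.

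I do not anticipate a genuine obstacle: the argument is elementary once one observes that a point $x$ with $f(x)>\alpha\|x\|$ lies in the (open) interior of $C(f,\alpha)$ and hence cannot belong to its boundary, which is the only mildly non-trivial point. Note that the hypotheses $f\in S_{X^*}$ and $0<\alpha<1$ are not really needed for the two inclusions themselves; they serve only to guarantee that $C(f,\alpha)$ is a nontrivial closed cone with nonempty interior, which is the setting in which the lemma is applied.
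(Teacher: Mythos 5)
Your proof is correct and follows essentially the same route as the paper: both reduce each inclusion to the pointwise bounds $f\ge\alpha$ on $C(f,\alpha)\cap S_X$ and $f\le\alpha$ on $(\mbox{bd}(C(f,\alpha))\cap S_X)_0$ and then pass to convex hulls (the paper writes out the convex combinations explicitly, you invoke convexity of the half-spaces, which is the same computation). In fact your argument is slightly more complete, since you justify the implication $x\in\mbox{bd}(C(f,\alpha))\Rightarrow f(x)=\alpha\|x\|$ via the openness of $\{x\colon f(x)-\alpha\|x\|>0\}$, a step the paper asserts without proof.
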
 
\begin{proof}
$(a)$ Fix some $x \in \mbox{co} (  C(f,\alpha) \cap S_X)$. 
Consider the case $x \in   C(f,\alpha) \cap S_X$. From $\|x\|=1 $ and $x \in  C(f,\alpha)$, we get that $f(x)\geq \alpha$.  Now, assume that $x= \sum_{i=1}^n \beta_i x_i $ with $0<\beta_i<1$ and $x_i \in  C(f,\alpha) \cap S_X$  for any $1\leq i \leq n $, $\sum_{i=1}^n \beta_i=1$. Then,
$f(x)= \sum_{i=1}^n \beta_i f(x_i) \geq \sum_{i=1}^n \beta_i \alpha= \alpha$.

$(b)$ Take some $x \in \mbox{co} ( (\mbox{bd}(C(f,\alpha))\cap S_X)_0)$. 
First, if $x \in   \mbox{bd}(C(f,\alpha)) \cap S_X$, as $x \in   \mbox{bd}(C(f,\alpha))
\Rightarrow f(x)= \alpha \|x\|$, and since $\|x\|=1 $, we get that $f(x)=\alpha$. 
Assume now that $x= \sum_{i=1}^n \beta_i x_i $ with $0<\beta_i<1$ and $x_i \in \mbox{bd}(C(f,\alpha))\cap S_X  $ or $x_i =0_X$  for any $1\leq i \leq n $, $\sum_{i=1}^n \beta_i=1$.
Then,
$f(x)= \sum_{i=1}^n \beta_i f(x_i) \leq \sum_{i=1}^n \beta_i \alpha= \alpha$.
\end{proof}

In the following example we present two Bishop-Phelps cones having the SSP.

\begin{example} \label{EjempoBPtieneSSP}
Let $f \in S_{X^*}$ and fix two real numbers $0<\alpha_1< \alpha_2 <  1  $. Then $(C(f, \alpha_2),C(f, \alpha_1)) $ has the SSP.
\end{example}
\begin{proof}
%$\cero\not \in \overline{{\mbox{co}}(\cC\cap S_X)-\mbox{co}((\mbox{bd}(\cK)\cap S_X)_0))}$.
Indeed,  by Lemma \ref{LemaBP}, we get that $ \mbox{co} (  C(f,\alpha) \cap S_X)\subset \{ f \geq \alpha_2 \}$  and $\mbox{co} ( (\mbox{bd}(C(f,\alpha))\cap S_X)_0) \subset \{ f \leq \alpha_1 \}$. Now since $\cero\not \in \overline{\{ f \geq \alpha_2 \} - \{ f \leq \alpha_1 \}}$, we finally get that $\cero\not \in \overline{{\mbox{co}}(\cC\cap S_X)-\mbox{co}((\mbox{bd}(\cK)\cap S_X)_0)}$.

\end{proof}

It is clear that $(\cC,\cK)$ possesses the SSP if and only if $(-\cC,-\cK)$ possesses the SSP. This equivalence  will be utilized in the proof of the main results in this section.  Our initial result ensures that a cone in the first coordinate of a cone pair that satisfies the SSP has a bounded base.
%Our first result guarantees that a cone in the first coordinate of a pair of cones satisfying SSP has a bounded base. This fact will help us to connect the two types of dilating cones that we will handle in this section via SSP.
\begin{proposition}\label{prop:separacion_implica_base_acotada}  
Let $X$ be a normed space and $\cC$, $\cK$ cones on $X$. If $\cC$ is convex and  $(\cC,\cK)$ has the SSP, then $\cC$ has a bounded base.  
\end{proposition}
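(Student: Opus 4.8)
The plan is to use the SSP hypothesis to produce a strictly positive functional $f \in \cC^\#$ whose associated base $\{x \in \cC : f(x)=1\}$ is bounded; by the facts recalled in Section~\ref{Notation}, exhibiting one strictly positive functional together with a quantitative lower bound on it along $\cC \cap S_X$ is exactly what forces a bounded base. Concretely, write $A := \overline{\mathrm{co}}(\cC\cap S_X)$ and $D := \mathrm{co}((\mathrm{bd}(\cK)\cap S_X)_0)$. The SSP says $\cero \notin \overline{A - D}$, and in particular $\cero \notin \overline{A}$ (take the point $\cero \in D$, since $\cero$ belongs to $(\mathrm{bd}(\cK)\cap S_X)_0$ and hence to its convex hull), i.e. $\cero \notin \overline{\mathrm{co}}(\cC\cap S_X)$.

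First I would apply a separation/distance argument: since $\cero \notin \overline{\mathrm{co}}(\cC\cap S_X)$ and this set is closed and convex, there is $\delta := \mathrm{dist}(\cero, \overline{\mathrm{co}}(\cC\cap S_X)) > 0$, and by the Hahn–Banach separation theorem there exists $f \in S_{X^*}$ and a real number $\gamma > 0$ with $f(x) \geq \gamma$ for all $x \in \mathrm{co}(\cC\cap S_X)$, in particular for all $x \in \cC \cap S_X$. Next I would promote this to strict positivity of $f$ on all of $\cC\setminus\{\cero\}$: for arbitrary $x \in \cC$, $x \neq \cero$, the vector $x/\|x\|$ lies in $\cC \cap S_X$ (here convexity of $\cC$ is not even needed for this step, only that $\cC$ is a cone), so $f(x/\|x\|) \geq \gamma$, whence $f(x) \geq \gamma\|x\| > 0$. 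Thus $f \in \cC^\#$, and $B := \{x \in \cC : f(x) = 1\}$ is a base for $\cC$.

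It remains to check that $B$ is bounded. For $x \in B$ we have $f(x) = 1$ and, by the inequality just derived, $1 = f(x) \geq \gamma\|x\|$, so $\|x\| \leq 1/\gamma$; hence $B \subset (1/\gamma) B_X$ is bounded, and $\cC$ has a bounded base. I do not expect a serious obstacle here: the only mild subtlety is noticing that the SSP already contains the separation of $\cero$ from $\overline{\mathrm{co}}(\cC\cap S_X)$ alone (because $\cero \in D$), after which the argument is the standard equivalence between "a strictly positive functional bounded below on the normalized cone" and "a bounded base." One could alternatively phrase the whole thing by quoting Example~\ref{EjempoBPtieneSSP} and Lemma~\ref{LemaBP} in reverse, but the direct separation argument above seems cleanest and avoids constructing an intermediate Bishop–Phelps cone.
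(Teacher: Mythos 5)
Your proof is correct, but it follows a genuinely different and more self-contained route than the paper. The paper dispatches the proposition in two lines: it cites [Theorem 3.1] of \cite{GARCIACASTANO2022} to conclude that the SSP forces $\cC^{a\#}_+\neq\emptyset$, and then invokes Lemma~\ref{lema:C_base_acotada_equiv_cono_dual_aumentado_no_vacio} (the equivalence between having a bounded base and $\cC^{a\#}_+\neq\emptyset$, itself proved via a separation theorem from an earlier reference). You instead notice that $\cero$ belongs to $\mbox{co}((\mbox{bd}(\cK)\cap S_X)_0)$ by the very definition of the subscript $0$, so the SSP already implies $\cero\notin\overline{\mbox{co}(\cC\cap S_X)}$ on its own; a direct Hahn--Banach separation then yields $f\in S_{X^*}$ with $f\geq\gamma>0$ on $\cC\cap S_X$, hence $f\in\cC^{\#}$ with $f(x)\geq\gamma\|x\|$ on $\cC$, and the base $\{x\in\cC\colon f(x)=1\}$ is contained in $(1/\gamma)B_X$. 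This buys two things the paper's version does not make explicit: the argument is elementary and avoids the augmented-dual-cone machinery and the external citations, and it isolates the fact that only the ``$\cC$-half'' of the SSP is used --- the cone $\cK$ plays no role whatsoever in this proposition. All the individual steps check out (the normalization $x/\|x\|\in\cC\cap S_X$ needs only that $\cC$ is a cone, convexity of $\cC$ enters only to ensure that the level set of $f$ is a base in the sense defined in Section~\ref{Notation}), so there is nothing to repair.
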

To demonstrate the previous result, we will rely on the following two lemmas concerning augmented dual cones. It is worth mentioning that certain statements improve upon technical outcomes presented in \cite{Kasimbeyli2010}.
%To prove the former result we will make use of the following two  lemmas regarding augmented dual cones. Note that some assertions enhance technical results from \cite{Kasimbeyli2010}.
\begin{lemma}\label{lem:S(f,alpha)_base_acotada}
Let $X$ be a normed space and $\cC\subset X$ a cone. The following statements hold.
\begin{itemize}
\item[(i)]$\cC^{a*}_+ \not = \emptyset$ if and only if $\cC^{a\#}_+ \not = \emptyset$.
\item[(ii)] If $(f,\alpha)\in \cC^{a*}$ (resp. $(f,\alpha)\in \cC^{a\#}$), then $\alpha \leq \| f \|_*$ (resp. $\alpha < \| f \|_*$).
\item[(iii)] If $(f,\alpha)\in \cC^{a*}$ and $\alpha>0$, then $S(f,\alpha)$  is a closed convex cone with a bounded base. In addition, we have $-\cC\subset S(f,\alpha)$ for every $0 \leq \alpha \leq \| f \|_*$. 
\item[(iv)] If $f \in X^*$ and $0<\alpha<\| f \|_*$, then int$(S(f,\alpha))=\{x \in X\colon f(x)+\alpha \normx <0\}\not = \emptyset$.
\end{itemize}
\end{lemma}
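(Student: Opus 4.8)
The plan is to treat the four items separately, since (i) and (ii) are immediate and (iii)--(iv) carry the actual content.

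\textbf{Items (i) and (ii).} For (i), the inclusion $\cC^{a\#}_+\subseteq\cC^{a*}_+$ is trivial, so only the forward implication needs an argument; given $(f,\alpha)\in\cC^{a*}_+$ I would check that $(f,\alpha/2)\in\cC^{a\#}_+$, since $f\in\cC^\#$ and $\alpha/2>0$ are inherited and, for every $x\in\cC$ with $x\neq\cero$, $f(x)-\tfrac{\alpha}{2}\normx=\bigl(f(x)-\alpha\normx\bigr)+\tfrac{\alpha}{2}\normx\geq\tfrac{\alpha}{2}\normx>0$. For (ii), since all cones are nontrivial I fix $x\in\cC$ with $\normx=1$; then the defining inequality $f(x)\geq\alpha\normx=\alpha$ (resp.\ $f(x)>\alpha$) combined with $f(x)\leq\|f\|_*$ yields $\alpha\leq\|f\|_*$ (resp.\ $\alpha<\|f\|_*$).

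\textbf{Item (iii).} I would first record that $S(f,\alpha)$ is the $0$-sublevel set of the function $\varphi(x):=f(x)+\alpha\normx$, which is continuous, convex and positively homogeneous; hence $S(f,\alpha)$ is a closed convex cone, and it is nontrivial because it contains $-\cC$ (see below). The key step is the bounded base: for every $x\in S(f,\alpha)$ one has $(-f)(x)\geq\alpha\normx$, so (as $\alpha>0$) the functional $g:=-f$ lies in $S(f,\alpha)^\#$, and the canonical base $B:=\{x\in S(f,\alpha)\colon g(x)=1\}$ attached to $g$ (recalled in Section~\ref{Notation}) satisfies $\normx\leq 1/\alpha$ for all $x\in B$, hence is bounded. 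The ``in addition'' clause is a direct substitution into the definition of $S(f,\alpha)$ using the inequality defining $\cC^{a*}$: for $x\in\cC$, $\varphi(-x)=-f(x)+\alpha\normx=-\bigl(f(x)-\alpha\normx\bigr)\leq 0$, so $-x\in S(f,\alpha)$; the admissible range of the parameter is then located by part~(ii), and the inclusion propagates to smaller values of $\alpha$ since $S(f,\alpha)\subseteq S(f,\beta)$ whenever $0\leq\beta\leq\alpha$.

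\textbf{Item (iv) (the main obstacle).} The inclusion $\{x\colon\varphi(x)<0\}\subseteq\mathrm{int}(S(f,\alpha))$ is immediate from continuity of $\varphi$. The converse is the delicate point, and I would argue by contradiction: if some $x_0\in\mathrm{int}(S(f,\alpha))$ had $\varphi(x_0)=0$, then $\varphi\leq 0=\varphi(x_0)$ on a ball $B(x_0,r)$, i.e.\ the convex function $\varphi$ attains a local maximum at the interior point $x_0$; comparing $\varphi(x_0)$ with $\tfrac12\varphi(y)+\tfrac12\varphi(2x_0-y)$ for $y\in B(x_0,r)$ forces $\varphi\equiv 0$ there. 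This would make $f$ $\alpha$-Lipschitz on $B(x_0,r)$, contradicting $\|f\|_*>\alpha$: choosing $h$ with $\|h\|\leq 1$ and $f(h)>\alpha$ gives $\varphi(x_0+th)\geq\varphi(x_0)+t\bigl(f(h)-\alpha\|h\|\bigr)>0$ for small $t>0$. Hence $\mathrm{int}(S(f,\alpha))=\{x\colon\varphi(x)<0\}$. Finally, nonemptiness: since $\|f\|_*>\alpha$ there is $u\in S_X$ with $f(u)<-\alpha$, whence $\varphi(u)=f(u)+\alpha<0$, so the set is nonempty. The only nonroutine ingredient is the convexity argument excluding an interior local maximum together with its combination with $\|f\|_*>\alpha$; the remaining verifications are bookkeeping.
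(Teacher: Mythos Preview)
Your proof is correct and follows essentially the same route as the paper's: items (i)--(iii) are handled identically (you supply the elementary ``halve $\alpha$'' argument for (i) where the paper cites an external lemma, but the content is the same), and for (iv) both arguments derive a contradiction from an interior point $x_0$ with $\varphi(x_0)=0$ by showing this would force $\|f\|_*\le\alpha$. The only cosmetic difference is that the paper reaches this via a direct triangle-inequality estimate $f(y)-\alpha\|y\|\le f(x_0+y)+\alpha\|x_0+y\|\le 0$ for $y\in\epsilon B_X$, whereas you first use convexity to upgrade $\varphi\le 0$ to $\varphi\equiv 0$ on the ball before exhibiting a direction $h$ that violates it; both are equally valid and equally short.
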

\begin{proof}
(i) It is a consequence of the inclusion $\cC^{a\#}_+\subset \cC^{a*}_+$ and \cite[Lemma 3.2, (ii)]{Kasimbeyli2010}.

(ii) Let $(f,\alpha)\in \cC^{a*}$. Then $f(x)\geq \alpha$, $\forall x \in \cC\cap S_X$. Now, by definition of dual norm as a supremum over $S_X$, we have $\alpha \leq \| f \|_*$. The case $(f,\alpha)\in \cC^{a\#}$ is analogous.

(iii) It is easy to check that $S(f,\alpha)$ is a cone.  For the proof of the inclusion $-\cC \subset S(f,\alpha)$, we refer the reader to \cite[Lemma 3.2]{Kasimbeyli2010}. Finally, we will check that $S(f,\alpha)$ is convex and it has a bounded base. We fix $x_1$, $x_2 \in S(f,\alpha)$ and $\lambda \in (0,1)$. Then $f(\lambda x_1+(1-\lambda)x_2)+\alpha\| \lambda x_1+(1-\lambda)x_2\| \leq \lambda[f(x_1)+\alpha\| x_1\|]+(1-\lambda)[f(x_2)+\alpha\| x_2\|]\leq 0$. In order to check that $S(f,\alpha)$ has a bounded base, we define $g:=-f\in X^*$ and consider $x \in S(f,\alpha)$, $x \not = \cero$. Then $f(x)+\alpha\| x \| \leq 0$, which implies that $g(\frac{x}{\normx})\geq \alpha >0$. Then $g \in S(f,\alpha)^{\#}$. As a consequence, $B:=\{x\in  S(f,\alpha)\colon g(x)=1\}$ is a base for $S(f,\alpha)$. The base $B$ is bounded because $\normx\leq \frac{1}{\alpha}$ for every $x \in B$. Indeed, if $x \in B$, then $0 \geq f(x)+\alpha \normx=-1+\alpha \normx \Rightarrow \normx\leq \frac{1}{\alpha}$.

(iv) We define the function $g:X\rightarrow \mathbb{R}$ by $g(x):=f(x)+\alpha \normx$, $\forall x \in X$. Since $g$ is continuous, the set $H:=\{x \in X\colon f(x)+\alpha \normx <0\}$ is open and  contained in $S(f,\alpha)$. Let us check now that $H\not = \emptyset$. As $\alpha < \normf$, there exists $x\in S_X$ such that $\alpha<f(x)\leq \normf$. Then $f(-x)<-\alpha$, which implies $f(-x)+\alpha \normx<0$. Thus $f(-x)+\alpha \| -x\|<0$, i.e., $-x \in H$. To finish the proof it is sufficient to check that if $x \in \mbox{int}(S(f,\alpha))$, then $f(x)+\alpha \| x\| <0$.  To this end, consider $x_0 \in \mbox{int}(S(f,\alpha))$ and assume, contrary to our claim, that $f(x_0)+\alpha  \| x_0 \|= 0$. Choose $\epsilon>0$ such that $B(x_0,\epsilon):=\{x \in X\colon \| x-x_0\| \leq \epsilon\}\subset S(f,\alpha)$. Then  $f(x_0+y)+\alpha  \| x_0+y\|\leq 0$ for every $y \in \epsilon B_X$.  Now pick any $y \in \epsilon B_X$. It follows that $f(y)-\alpha \| y \|=f(y)-\alpha \| y \|+f(x_0)+\alpha  \| x_0 \|\leq f(x_0+y)+\alpha  \| x_0+y\|\leq 0$. Then $f(y)\leq \alpha \| y \|$. As $y$ was arbitrarily taken, the last inequality holds true for every $y \in \epsilon B_X$. Hence, $f(-y)\leq \alpha \| -y \|$ for every $y \in \epsilon B_X$. As a consequence, $|f(y)|\leq \alpha \| y \|$ for every $y \in \epsilon B_X$ and then $\| f \|_* \leq \alpha$, which contradicts the assumption $\alpha< \| f \|_*$.
\end{proof}

In the following result, we relax the assumption of a weak compact base in \cite[Theorem 3.7]{Kasimbeyli2010} and strengthen the conclusion of \cite[Corollary 3.3]{Kasimbeyli2010} by obtaining a cone with a bounded base instead of a pointed cone.
\begin{lemma}\label{lema:C_base_acotada_equiv_cono_dual_aumentado_no_vacio}
Let $X$ be a normed space and $\cC\subset X$ a convex cone. Then $\cC$ has a bounded base if and only if $\cC^{a\#}_+\not =\emptyset$.
\end{lemma}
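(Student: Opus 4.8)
The plan is to prove the two implications separately and directly: the reverse implication ($\cC^{a\#}_+\neq\emptyset\Rightarrow$ bounded base) is almost immediate from the preliminaries, while the forward implication rests on a single Hahn--Banach separation.

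For the reverse direction, I would pick $(f,\alpha)\in\cC^{a\#}_+$ and take $B:=\{x\in\cC\colon f(x)=1\}$, which is a base for $\cC$ because $f\in\cC^{\#}$ (this is one of the facts recalled in Section~\ref{Notation}). To see that $B$ is bounded, I would note that $\cero\notin B$ (since $f(\cero)=0\neq 1$), so every $x\in B$ lies in $\cC\setminus\{\cero\}$ and hence satisfies $0<f(x)-\alpha\normx=1-\alpha\normx$, i.e. $\normx<1/\alpha$; thus $B\subset B^{\circ}(\cero,1/\alpha)$ is a bounded base.

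For the forward direction, suppose $\cC$ has a bounded base $B$. Then $\overline{B}$ is a nonempty closed convex set, bounded (say $\|b\|\le M<\infty$ for all $b\in\overline{B}$) with $\cero\notin\overline{B}$, so the Hahn--Banach separation theorem yields $f\in X^{*}$ and $\delta>0$ with $f(b)\ge\delta$ for every $b\in B$. I would then claim that $(f,\alpha)\in\cC^{a\#}_+$ for any $\alpha$ with $0<\alpha<\delta/M$: writing each $x\in\cC\setminus\{\cero\}$ uniquely as $x=\lambda_x b_x$ with $\lambda_x>0$, $b_x\in B$, one gets
\[
f(x)-\alpha\normx=\lambda_x\bigl(f(b_x)-\alpha\|b_x\|\bigr)\ge\lambda_x(\delta-\alpha M)>0 .
\]
In particular $f(x)>0$ on $\cC\setminus\{\cero\}$, so $f\in\cC^{\#}$, and the displayed inequality is exactly the defining condition of $\cC^{a\#}_+$ (the condition $\alpha<\|f\|_*$ being automatic by Lemma~\ref{lem:S(f,alpha)_base_acotada}(ii)). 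Hence $\cC^{a\#}_+\neq\emptyset$.

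I do not expect a genuine obstacle here: the whole content is the homogeneity of the base representation combined with Hahn--Banach. The only points needing a word of care are that the separation be \emph{strict} so that $\delta>0$ — guaranteed by $\cero\notin\overline{B}$ together with the closedness and convexity of $\overline{B}$ — and that $M<\infty$, which is precisely the bounded-base hypothesis; the choice $\alpha<\delta/M$ then does the rest.
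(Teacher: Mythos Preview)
Your proof is correct. Both directions go through as written; the only superfluous remark is the parenthetical about $\alpha<\|f\|_*$, since that inequality is not part of the definition of $\cC^{a\#}_+$ but merely a consequence of membership.

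Your route differs from the paper's. The paper leans twice on an external characterization (Theorem~1.1 of \cite{GARCIACASTANO20151178}): a convex cone has a bounded base if and only if some $f\in S_{X^*}$ satisfies $\inf_{S_X\cap\cC}f>0$. For $\Rightarrow$ the paper quotes this to obtain $f$ with $f(x/\|x\|)\ge\delta>0$ on $\cC\setminus\{\cero\}$ and then takes $\alpha=\delta/2$; for $\Leftarrow$ it observes $\inf_{S_X\cap\cC}f\ge\alpha>0$ from $(f,\alpha)\in\cC^{a*}_+$ and quotes the theorem back. You instead argue directly: for $\Leftarrow$ you write down the base $\{f=1\}$ and bound it by $1/\alpha$; for $\Rightarrow$ you separate $\{\cero\}$ from $\overline{B}$ by Hahn--Banach and exploit the homogeneous decomposition $x=\lambda_x b_x$. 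Your argument is more self-contained and exposes exactly where boundedness of $B$ (the constant $M$) and the strict separation (the constant $\delta$) enter; the paper's version is terser but imports the real work from the cited theorem, which itself is essentially the Hahn--Banach step you carried out.
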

\begin{proof}
$\Rightarrow$ Assume that $\cC$ has a bounded base, then $\cC$ is pointed. Now, by \cite[Theorem~1.1]{GARCIACASTANO20151178}, there exists $f \in S_{X^*}$ such that $\inf_{S_X\cap \cC}f=\delta>0$. Let us fix $x \in \cC\setminus \{\cero\}$. We have $f(\frac{x}{\normx})\geq \delta>0$. Hence $f \in \cC^{\#}$. On the other hand, we have $f(x)-\frac{\delta}{2}\normx>f(x)-\delta\normx \geq  0$. Thus $(f,\frac{\delta}{2})\in \cC^{a \#}_+$. $\Leftarrow$ Now, assume that $\cC^{a\#}_+\not =\emptyset$. Then, by Lemma \ref{lem:S(f,alpha)_base_acotada} assertion (i) we can pick some $(f,\alpha)\in \cC^{a*}_+$. Then $\inf_{S_X\cap\cC}f\geq \alpha>0$.  Applying again \cite[Theorem 1.1]{GARCIACASTANO20151178}, we conclude that $\cC$ has a bounded base. 
\end{proof}

\begin{proof}[Proof of Proposition \ref{prop:separacion_implica_base_acotada}]
By \cite[Theorem 3.1]{GARCIACASTANO2022}  we have $\cC^{a\#}_+\not =\emptyset$. Now, Lemma~\ref{lema:C_base_acotada_equiv_cono_dual_aumentado_no_vacio} applies and  $\cC$ has a bounded base.  
\end{proof}

Next, we define the two types of dilating cones that we will use in this work. The $\epsilon$-conic neighborhoods we introduce are the closure of the $\epsilon$-conic neighborhoods introduced in \cite[Definition 4.2]{Kasimbeyli2010}, while for the Henig dilating cones, we follow \cite[Definition 3.3]{Gopfert2004}.

We recall the definition of the distance between a point $x \in X$ and a subset $A \subset X$, denoted by $d(x,A)$, which is defined as $d(x,A) := \inf \{\| x - a\| \colon a \in A\}$. 

\begin{lemma}\label{Lemma_formula_de_la_clausura}
Let $X$ be a normed space and $A\subset X$ a subset. Then, the equality  $\overline{A + \epsilon B_X}=\{x\in X\colon d(x,A)\leq \epsilon\}$, holds for every $\epsilon>0$.
\end{lemma}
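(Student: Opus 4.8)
The plan is to establish the two set inclusions separately; the identity is elementary once we unfold the definitions of Minkowski sum, closure, and distance function. Throughout, write $D:=\{x\in X\colon d(x,A)\le\epsilon\}$.

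First I would show $A+\epsilon B_X\subset D$, and then take closures. If $x=a+\epsilon b$ with $a\in A$ and $b\in B_X$, then $\|x-a\|=\epsilon\|b\|\le\epsilon$, hence $d(x,A)\le\epsilon$, i.e.\ $x\in D$. Since the function $x\mapsto d(x,A)$ is $1$-Lipschitz (from the triangle inequality $|d(x,A)-d(y,A)|\le\|x-y\|$), it is continuous, so $D=(d(\cdot,A))^{-1}((-\infty,\epsilon])$ is closed. Therefore $\overline{A+\epsilon B_X}\subset \overline{D}=D$.

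For the reverse inclusion $D\subset\overline{A+\epsilon B_X}$, fix $x\in D$, so $d(x,A)\le\epsilon$, and fix $\eta>0$; I must produce a point of $A+\epsilon B_X$ within $\eta$ of $x$. By definition of the infimum there is $a\in A$ with $\|x-a\|\le d(x,A)+\eta\le\epsilon+\eta$. If $x=a$ then $x=a+\epsilon\cdot 0_X\in A+\epsilon B_X$ and we are done; otherwise put $r:=\|x-a\|>0$ and consider the point $y:=a+\epsilon\,\frac{x-a}{r}$, which lies in $A+\epsilon B_X$ since $\bigl\|\frac{x-a}{r}\bigr\|=1$. Then
\[
\|x-y\|=\Bigl\|(x-a)-\epsilon\tfrac{x-a}{r}\Bigr\|=\Bigl|1-\tfrac{\epsilon}{r}\Bigr|\,\|x-a\|=|r-\epsilon|.
\]
Since $r\le\epsilon+\eta$ and (by choice of $a$) we may also assume $r$ is close to $d(x,A)\le\epsilon$, we get $|r-\epsilon|\le\eta$ when $r\ge\epsilon$; and when $r<\epsilon$ we have $|r-\epsilon|=\epsilon-r\le\epsilon-d(x,A)$, which can be made $\le\eta$ by choosing $a$ with $\|x-a\|\le d(x,A)+\eta$ only if $d(x,A)$ is close to $\epsilon$ — so to be safe I would instead argue uniformly: choose $a\in A$ with $r:=\|x-a\|<\epsilon+\eta$; if $r\le\epsilon$, then $x$ itself satisfies $\|x-a\|\le\epsilon$, so $x\in A+\epsilon B_X\subset\overline{A+\epsilon B_X}$ directly; if $r>\epsilon$, then $y$ as above satisfies $\|x-y\|=r-\epsilon<\eta$. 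In either case $x\in\overline{A+\epsilon B_X}$, completing the proof.

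There is no serious obstacle here; the only point requiring a little care is that $D$ is closed, which rests on the $1$-Lipschitz property of the distance function, and the bookkeeping in the reverse inclusion to handle the boundary case $d(x,A)=\epsilon$ (where the nearest point need not be attained) — the device of radially projecting $a$ toward $x$ onto the sphere of radius $\epsilon$ around $a$ handles this cleanly.
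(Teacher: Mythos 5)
Your proof is correct and follows essentially the same route as the paper: both directions reduce to the same elementary estimates, with your reverse inclusion (radially rescaling $x-a$ to land in $\epsilon B_X$) being the same device the paper implements via the points $y_n+\epsilon b_n$ with $b_n=(x-y_n)/(\epsilon+\tfrac1n)$. The only cosmetic difference is that for the forward inclusion you invoke closedness of the sublevel set $\{d(\cdot,A)\le\epsilon\}$ via the $1$-Lipschitz property, where the paper estimates $\|x-a_n\|\le\tfrac1n+\epsilon$ directly; and your mid-proof detour in the reverse inclusion could simply be deleted in favor of the final "uniform" case split, which is the valid version.
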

\begin{proof}
We will prove the equality by double inclusion. We first prove $\subset$. Fix any $x \in \overline{A + \epsilon B_X}$. Then, for any $n\geq 1$ there exists $x_n\in A+\epsilon B_X$ such that $\|x-x_n\|\leq \frac{1}{n}$. Fixed $n\geq 1$, we can write $x_n=a_n+\epsilon b_n$ for some $a_n\in A$ and $b_n\in B_X$. As a consequence, $\|x-a_n\|\leq \|x-x_n\|+\|\epsilon b_n\|\leq \frac{1}{n}+\epsilon$ and $\inf\{\|x-a\|\colon a \in A\}\leq \epsilon$. Now we prove the inclusion $\supset$. Let consider $x\in X$ such that $d(x,A)\leq \epsilon$. By definition of infimum, for every $n\geq 1$, there exists $y_n\in A$ such that $\|x-y_n\|\leq \epsilon+\frac{1}{n}$. We can write, for every $n\geq 1$, $x-y_n=(\epsilon+\frac{1}{n})b_n$ for some $b_n\in B_X$. As $\|x-y_n-\epsilon b_n\|\leq \frac{1}{n}$, it follows that $x=\lim_n(y_n+\epsilon b_n)\in \overline{A + \epsilon B_X}$.
\end{proof}

For a given cone $\cC \subset X$ and $0 < \epsilon < 1$, we define the set $S_{\epsilon} := \{x \in X \colon d(x, \cC \cap S_X) \leq \epsilon\}$. By the precedent lemma, we have $S_{\epsilon} = \overline{(S_X \cap \cC) + \epsilon B_X}$. Furthermore, if $\cC$ is convex and $B$ is a base of $\cC$, we define $B_{\epsilon} := \{x \in X \colon d(x,B) \leq \epsilon\}$ and $\delta_B := \inf_{b \in B} \| b \| > 0$. Additionally, we have the equality $B_{\epsilon} = \overline{B + \epsilon B_X}$.
\begin{definition}\label{def:epsilon_conic_neighbourhood} 
Let $X$ be a normed space and $\cC\subset X$ a cone.  We introduce the following definitions:
\begin{itemize}
\item[(i)]For every $0<\epsilon<1$, the cone $\cC_{\epsilon}:=\mbox{cone}(S_{\epsilon})$ is called a $\epsilon$-conic neighbourhood of $\cC$.  
\item[(ii)] Assuming that $\cC$ is convex and $B$ is a base of $\cC$, for every $0<\epsilon<\min\{1, \delta_B\}$, the cone $\cC_{(B,\epsilon)}:=\mbox{cone}(B_{\epsilon})$ is called a Henig dilating cone of $\cC$. 
\end{itemize}
\end{definition}
%For simplicity of notation in the following results, we will write  $\cC_{B,\epsilon}$ for $0<\epsilon<1$ understanding that we are assuming $0<\epsilon<\min\{1,  \delta_B\}$. This abuse of notation should not cause any misunderstanding because in our work $\epsilon>0$ can always be taken small enough.  
%In our main results in Section \ref{sec:exist_henig} we will assume that the ordering cone and its $\epsilon$-conic neighbourhoods  have SSP. But in some proofs, Henig dilating cones have an important role because its convexity is crucial in our arguments.  For such a reason, 
Below, we establish two lemmas that gather both the most elementary properties of the two introduced types of dilating cones and the way in which they can be ordered by inclusion. It should be noted that an $\epsilon$-conic neighborhood of a convex cone is not necessarily convex or pointed. To illustrate this, consider, for example, the cone $\cC:=\{(x,y)\in \mathbb{R}^2\colon x>0\}\cup\{(0,0)\}$ in $\mathbb{R}^2$ with the euclidean norm. Then, for every $0<\epsilon<1$, we have that the $\epsilon$-conic neighborhood $\cC_{\epsilon}=\{(x,y)\in \mathbb{R}^2\colon y \geq -\frac{\sqrt{1-\epsilon^2}}{\epsilon}x\}\cup \{(x,y)\in \mathbb{R}^2\colon y \leq \frac{\sqrt{1-\epsilon^2}}{\epsilon}x\}$  is a cone that is neither convex nor pointed. However, the following result provides certain properties of dilating cones that will be useful later.

\begin{lemma}\label{lema:propiedades_entornos_conicos}
Let $X$ be a normed space and $\cC\subset X$ a cone. The following statements hold.
\begin{itemize}
\item[(i)]  The cone $\cC_{\epsilon}$ is closed  for every $0<\epsilon<1$.
\item[(ii)] Assuming that $\cC$ is convex and $B$ is a base of $\cC$, there exists $0<\epsilon_B<1$ such that if $0<\epsilon <\epsilon_B$, then the cone $\cC_{\epsilon}$ is pointed and the cone $\cC_{(B, \epsilon)}$ is  closed and convex.  
\end{itemize}
\end{lemma}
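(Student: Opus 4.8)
The plan is to prove (i) and (ii) separately, using Lemma \ref{Lemma_formula_de_la_clausura} to handle the distance-description of $S_\epsilon$ and $B_\epsilon$ throughout.

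\emph{Part (i).} I would show that $\cC_\epsilon=\mathrm{cone}(S_\epsilon)$ is closed by exploiting that $S_\epsilon$ is a closed set that stays uniformly away from $\cero$. More precisely, fix $0<\epsilon<1$. Every $x\in S_X\cap\cC$ has norm $1$, so every point of $S_\epsilon$ has norm at least $1-\epsilon>0$; thus $\cero\notin S_\epsilon$ and $\|s\|\ge 1-\epsilon$ for all $s\in S_\epsilon$. Now take a sequence $\lambda_n s_n\to z$ with $\lambda_n\ge 0$ and $s_n\in S_\epsilon$. If $z=\cero$ we are done since $\cero\in\cC_\epsilon$. Otherwise, for large $n$ we have $\|\lambda_n s_n\|$ bounded away from $0$ and above, hence $\lambda_n=\|\lambda_n s_n\|/\|s_n\|$ lies in a compact interval of $(0,\infty)$; passing to a subsequence $\lambda_n\to\lambda>0$, so $s_n=(\lambda_n s_n)/\lambda_n\to z/\lambda$. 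Since $S_\epsilon$ is closed (it equals $\{x:d(x,\cC\cap S_X)\le\epsilon\}$, a sublevel set of the continuous function $x\mapsto d(x,\cC\cap S_X)$), we get $z/\lambda\in S_\epsilon$, hence $z=\lambda(z/\lambda)\in\mathrm{cone}(S_\epsilon)=\cC_\epsilon$. This is the standard "cone over a closed set bounded away from the origin is closed" argument, and I expect it to go through routinely.

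\emph{Part (ii).} Assume $\cC$ convex with base $B$, and recall $\delta_B=\inf_{b\in B}\|b\|>0$ and $B_\epsilon=\overline{B+\epsilon B_X}=\{x:d(x,B)\le\epsilon\}$. The candidate threshold is $\epsilon_B:=\min\{1,\delta_B/2\}$ (any fixed constant strictly less than $\delta_B$ and $1$ will do); I would fix $0<\epsilon<\epsilon_B$. For closedness and convexity of $\cC_{(B,\epsilon)}$: the set $B_\epsilon$ is closed, convex (being the closure of the sum of two convex sets), and bounded away from $\cero$ since $\|x\|\ge \|b\|-\epsilon\ge\delta_B-\epsilon>0$ for $x=b+\epsilon b'\in B+\epsilon B_X$ and hence, by continuity of the norm, for all $x\in B_\epsilon$; moreover $B_\epsilon$ is a base-like set in the sense that no two of its points are positive multiples of one another once $\epsilon<\delta_B$ — but for closedness it suffices to rerun the cone argument of Part (i) with $S_\epsilon$ replaced by $B_\epsilon$, and convexity of $\mathrm{cone}(B_\epsilon)$ follows from convexity of $B_\epsilon$ together with the fact that a cone over a convex set is convex. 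For pointedness of $\cC_\epsilon$: I would argue that since $\cC$ is convex and pointed (a cone with a base is pointed), by \cite[Theorem 1.1]{GARCIACASTANO20151178} — invoked in the same way as in the proof of Lemma \ref{lema:C_base_acotada_equiv_cono_dual_aumentado_no_vacio} — there is $f\in S_{X^*}$ with $\inf_{S_X\cap\cC}f=\delta>0$; then for $x\in S_X\cap\cC$ and $b'\in B_X$ one has $f(x+\epsilon b')\ge\delta-\epsilon$, so choosing $\epsilon_B\le\delta$ forces $f>0$ on $S_\epsilon$ and hence (by homogeneity) $f\ge 0$ on $\cC_\epsilon$ with $f(x)=0$ only at $\cero$; this strict positivity of a single functional on $\cC_\epsilon\setminus\{\cero\}$ rules out the existence of $z\in\cC_\epsilon\cap(-\cC_\epsilon)$ with $z\ne\cero$, giving pointedness.

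\emph{Main obstacle.} The delicate point is the choice of $\epsilon_B$: it must be small enough simultaneously for the pointedness of $\cC_\epsilon$ and the convexity of $\cC_{(B,\epsilon)}$. For pointedness the natural bound is $\epsilon_B\le\inf_{S_X\cap\cC}f$ (a separation constant), while for $\cC_{(B,\epsilon)}$ one needs $\epsilon<\delta_B$ so that $B_\epsilon$ still fails to contain antipodal rays; I would take $\epsilon_B$ to be the minimum of $1$, this separation constant, and $\delta_B$. Verifying that $\mathrm{cone}(B_\epsilon)$ is genuinely convex — i.e. that scaling and adding points of $B_\epsilon$ never escapes the cone — is where the bounded-base geometry is really used, and I expect that bookkeeping (rather than any conceptual difficulty) to be the fiddliest part of the write-up.
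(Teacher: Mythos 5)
Your part (i) is correct and follows essentially the same route as the paper: both arguments come down to the fact that $\mathrm{cone}(S_\epsilon)$ is closed because $S_\epsilon$ is closed and its elements have norms pinched between $1-\epsilon$ and $1+\epsilon$, which forces the scalars $\lambda_n$ into a compact subinterval of $(0,\infty)$ (the paper reaches the same bounds more laboriously via the decomposition $x_n=c_n+\epsilon_n b_n$). Your pointedness argument for $\cC_\epsilon$ in part (ii) is also the paper's. The one genuinely different choice is that, where the paper simply cites \cite[Lemma~3.4]{Gopfert2004} for the closedness and convexity of $\cC_{(B,\epsilon)}$, you prove both directly; that is a legitimate, more self-contained alternative.

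Two steps need repair, however. First, you extract the functional $f\in S_{X^*}$ with $\inf_{S_X\cap\cC}f>0$ from the hypothesis that ``$\cC$ is convex and pointed''. That is not what \cite[Theorem~1.1]{GARCIACASTANO20151178} gives: such an $f$ exists precisely when $\cC$ has a \emph{bounded} base, and a pointed convex cone with an unbounded base need not admit one (in which case $\cC_\epsilon$ is in fact not pointed for any $\epsilon$). The paper's own proof inserts ``assume $\cC$ has a bounded base'' at exactly this point; you must do the same. Second, rerunning the part (i) argument with $B_\epsilon$ in place of $S_\epsilon$ requires $B_\epsilon$ to be bounded \emph{above} in norm, not just bounded away from $\cero$: if $\|s_n\|\to\infty$ then $\lambda_n=\|\lambda_n s_n\|/\|s_n\|\to 0$ and $(s_n)_n$ need not converge. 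The conclusion genuinely fails otherwise: for $B=\{(x,1)\colon x\in\mathbb{R}\}\subset\mathbb{R}^2$ one gets $\mathrm{cone}(B_\epsilon)=\{(x,y)\colon y>0\}\cup\{\cero\}$, which is not closed. So the boundedness of $B$ is consumed by the closedness step, not, as your closing remark suggests, by the convexity step (convexity of $\mathrm{cone}(B_\epsilon)$ needs only that $B_\epsilon$ is convex).
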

\begin{proof}
(i) Fix  arbitrary $0<\epsilon<1$ and $x \in \overline{\cC_{\epsilon}}$. If $x=\cero$, then it is clear that $x \in \cC_{\epsilon}$. Assume that $x\not = \cero$,  we will check that $x \in \cC_{\epsilon}$ again.  Choose $m_0\geq 1$ such that $\frac{2}{m_0}<1-\epsilon$.  Consider $(x_n)_n \subset S_{\epsilon}$ and $(\lambda_n)_n \subset (0,+\infty)$ such that $x=\lim_n\lambda_nx_n$ and fix a sequence $(\epsilon_n)_n\subset (\epsilon,1)$ converging to $\epsilon$.  We claim that $S_{\epsilon}\subset \cC\cap S_X+\epsilon_n B_X$, for every $n\geq 1$.  Indeed, consider $z\in S_{\epsilon}=\overline{ \cC\cap S_X+\epsilon B_X}$. Then, there exists $z_n\in \cC\cap S_X+\epsilon B_X$ such that $\|z-z_n\|\leq \epsilon_n-\epsilon$ $\Rightarrow$ $z-z_n=(\epsilon_n-\epsilon)b_n$, for some $b_n\in B_X$.  On the other hand, for every $n\geq 1$, we can write $z_n=c_n+\epsilon b_n^*$ for some $c_n\in \cC\cap S_X$ and $b_n^*\in B_X$. Then, $z=c_n+\epsilon b_n^*+(\epsilon_n-\epsilon)b_n$ for every $n\geq 1$. As $\|\epsilon b_n^*+(\epsilon_n-\epsilon)b_n\|\leq \epsilon+\epsilon_n-\epsilon=\epsilon_n$, for every $n\geq 1$, it follows that $z \in  \cC\cap S_X+\epsilon_n B_X$, for every $n\geq 1$.  As a consequence,  $x_n \in S_{\epsilon}\subset \cC\cap S_X+\epsilon_n B_X$, for every $n\geq 1$. Then we choose sequences $(c_n)_n  \subset \cC\cap S_X$ and $(b_n)_n \subset B_X$ such that $x_n=c_n+\epsilon_nb_n\in S_{\epsilon}$ for every $n \geq 1$.  Thus $x=\lim_n\lambda_nx_n=\lim_n\lambda_n(c_n+\epsilon_nb_n)$. The sequence $(\lambda_n)_n$ is bounded above. Assume the contrary, then we would have $\cero =\lim_n\frac{x}{\lambda_n}=\lim_n(c_n+\epsilon_nb_n)$ (maybe for some subsequence), implying that $\cero \in \overline{S_{\epsilon}}=S_{\epsilon}$, which is not possible. 
%Indeed, if there exists a sequence $(y_n)_n\subset S_{\epsilon}$ such that $\lim_n\| y_n\|=0$, then there exists $n_0$ such that $\| y_n\|\leq \frac{1}{m_0}$ for every $n \geq n_0$. On the other hand, the inclusion $S_{\epsilon}\subset\cC\cap S_X+(\epsilon+\frac{1}{m_0})B_X$ implies that for every $n \geq 1$ there exists $z_n \in \cC\cap S_X$ such that $\| y_n-z_n\| \leq \epsilon+\frac{1}{m_0}$. Then we have  $1=\| z_{n_0}\|\leq \| z_{n_0}-y_{n_0}\|+\| y_{n_0}\|\leq \epsilon+\frac{1}{m_0}+\frac{1}{m_0}<1$, a contradiction.  
Then,  it is not restrictive to assume that $(\lambda_n)_n$ converges to some $\lambda \in [0,+\infty)$. If $\lambda=0$, then $x=\lim_n\lambda_n(c_n+\epsilon_nb_n)=0$ because the sequence $(c_n+\epsilon_nb_n)_n$ is bounded, a contradiction.  Then $\lambda>0$ and $\|\frac{x}{\lambda}-c_n\|\leq\|\frac{x}{\lambda}-c_n-\epsilon_nb_n)\| +\|\epsilon_nb_n\|\leq\|\frac{x}{\lambda}-(c_n+\epsilon_nb_n)\| +\epsilon_n$ for every $n \geq 1$. This implies that $\frac{x}{\lambda}\in S_{\epsilon}$.  Indeed, fix $n_0>1$ and consider $\epsilon_{n_0}$ such that $\epsilon_{n_0}\leq \epsilon+\frac{1}{2n_0}$. Now, as $\frac{x}{\lambda}=\lim_n (c_n+\epsilon_nb_n)$, there exists $n_1\geq n_0$ such that $\|\frac{x}{\lambda}-(c_{n_1}+\epsilon_{n_1}b_{n_1})\|\leq \frac{1}{2n_0}$ and $\epsilon_{n_1}\leq \epsilon_{n_0}$. Then, $\|\frac{x}{\lambda}-c_{n_1}\|\leq\|\frac{x}{\lambda}-(c_{n_1}+\epsilon_{n_1}b_{n_1})\| +\epsilon_{n_1}\leq \frac{1}{2n_0}+\epsilon_{n_0}\leq  \frac{1}{2n_0}+\epsilon+\frac{1}{2n_0}=\epsilon+\frac{1}{n_0}$. Therefore, $\inf\{\|\frac{x}{\lambda}-y\|\colon y \in \cC\cap S_X\}\leq \epsilon$ implying that  $\frac{x}{\lambda}\in S_{\epsilon}$. Finally,  $x=\lambda \frac{x}{\lambda} \in \mbox{cone}(S_{\epsilon})=\cC_{\epsilon}$.

(ii) We first show that $\cC_{\epsilon}$ is pointed for $\epsilon>0$ small enough.  Assume that $\cC$ has a bounded base.  By \cite[Theorem 1.1]{GARCIACASTANO20151178}, there exists $f \in S_{X^*}$ such that $\inf_{\cC\cap S_X} f=\delta>0$. Fix $\epsilon_1>0$ such that $|f(x)|<\frac{\delta}{2}$ for every $x\in \epsilon_1 B_X$.  We pick $\epsilon<\epsilon_1$ and take an arbitrary $x\in \cC\cap S_X+\epsilon B_X$.  Write $x=x'+x''$ for $x'\in \cC\cap S_X$, $x''\in \epsilon B_X$.  Then $f(x)=f(x')+f(x'')>\delta-\frac{\delta}{2}=\frac{\delta}{2}$. Hence  $f(x)\geq \frac{\delta}{2}$ whenever $d(x, \cC\cap S_X)\leq \epsilon$, which yields $f \in (\cC_{\epsilon})^{\#}$ and the last implies that $\cC_{\epsilon}$ is pointed.  On the other hand,  in  \cite[Lemma~3.4]{Gopfert2004} it is proved that  $\cC_{(B,\epsilon)}$ is  closed and convex for every $0<\epsilon < \delta_B=\inf_{b\in B}  \{ \| b\|\}$. The proof is over after defining $\epsilon_B:= \min\{\delta_B, 1,\epsilon_1\}$.  
\end{proof}
%The following result shows how the two types of approximating cones fit each other. This kind of relationship will be crucial in our arguments in the last part of the work.
\begin{lemma} \label{lema:dilatacion_dentro_entorno}
Let $X$ be a normed space, $\cC \subset X$ a convex cone having a bounded base,  and $0 < \epsilon <1$.  Then, there exist a bounded base $B$ of $\cC$ and $0<\epsilon' < \epsilon$ such that $\delta_B>\epsilon$,  $\cC_{(B,\epsilon)} \subset \cC_{\epsilon}$, and $\cC_{\alpha}\setminus \{\cero\} \subset int(\cC_{(B,\epsilon)})$ for every $0<\alpha<\epsilon'$.
\end{lemma}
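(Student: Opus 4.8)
The plan is to work with a fixed strictly positive functional and exploit the explicit descriptions $S_\epsilon=\overline{(\cC\cap S_X)+\epsilon B_X}$ and $B_\epsilon=\overline{B+\epsilon B_X}$. Since $\cC$ has a bounded base, by \cite[Theorem 1.1]{GARCIACASTANO20151178} there is $f\in S_{X^*}$ with $\delta:=\inf_{\cC\cap S_X}f>0$. First I would choose the base: for a suitable scalar $t>0$ the set $B:=\{x\in\cC\colon f(x)=t\}$ is a base of $\cC$, and by rescaling $t$ large enough one arranges $\delta_B=\inf_{b\in B}\|b\|>\epsilon$ (indeed every $b\in B$ satisfies $t=f(b)\le\|b\|$, so $\delta_B\ge t$, and one picks $t>\epsilon$). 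This secures the first requirement and makes $\cC_{(B,\epsilon)}$ well-defined; by Lemma \ref{lema:propiedades_entornos_conicos}(ii) (after possibly shrinking $\epsilon$, or simply noting the cited \cite[Lemma 3.4]{Gopfert2004} needs only $\epsilon<\delta_B$) it is closed and convex.

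Next I would prove $\cC_{(B,\epsilon)}\subset\cC_\epsilon$. It suffices to show $B_\epsilon\subset\cC_\epsilon$, since $\cC_\epsilon$ is a cone. Take $x$ with $d(x,B)\le\epsilon$, say $\|x-b\|\le\epsilon$ with $b\in B$; I would like to compare $x/\|b\|$ with $\cC\cap S_X$. Writing $c:=b/\|b\|\in\cC\cap S_X$, one estimates $\|x/\|b\|-c\|=\|x-b\|/\|b\|\le\epsilon/\delta_B<1$, so $x/\|b\|\in S_{\epsilon}$ provided $\epsilon/\delta_B\le\epsilon$, which holds since $\delta_B\ge 1$ would be needed — here a small care is required, so instead I would argue directly: $d(x,\cC\cap S_X)\le\|x-c\|$ is not quite what we want because we scaled; rather, since $\cC\cap S_X\subset\mathrm{cone}(B)\cap S_X$ and any $x\in B_\epsilon$ lies within $\epsilon$ of $B\subset\cC$, we get $x\in\mathrm{cone}(B+\epsilon B_X)$. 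The cleanest route: $B_\epsilon\subset\{x:d(x,\cC)\le\epsilon\}$ and one shows $\{x:d(x,\cC)\le\epsilon\}\subset\mathrm{cone}(S_\epsilon)$ by normalizing — for $x$ with $\|x-c_0\|\le\epsilon$, $c_0\in\cC\setminus\{0\}$, write $c_0=\lambda c$ with $c\in\cC\cap S_X$, $\lambda=\|c_0\|$; if $\lambda\ge 1$ then $\|x/\lambda-c\|\le\epsilon/\lambda\le\epsilon$ so $x/\lambda\in S_\epsilon$, while if $\lambda<1$ one must instead use that $\|x\|\le\lambda+\epsilon$ and handle the short vectors separately. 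This normalization bookkeeping — matching the right scaling factor so that a ball around a base point lands inside $S_\epsilon=\overline{(\cC\cap S_X)+\epsilon B_X}$ — is the main technical obstacle, and I expect it is exactly why the statement asks only for $\delta_B>\epsilon$ rather than an equality: the bound $\delta_B>\epsilon$ guarantees $\epsilon/\|b\|<1$ and lets the normalized point stay in the relevant neighborhood.

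Finally, for the interior inclusion I would first note $0\in\mathrm{int}(\cC_{(B,\epsilon)})$: since $\cC_{(B,\epsilon)}=\mathrm{cone}(B_\epsilon)$ and $B_\epsilon$ contains $B+\epsilon B_X^\circ$, for any $b_0\in B$ the set $b_0+\epsilon B_X^\circ\subset\cC_{(B,\epsilon)}$, hence $\cC_{(B,\epsilon)}$ has nonempty interior; then using that $\cC_{(B,\epsilon)}$ is a convex cone, its interior is again a cone, and one checks $B\subset\mathrm{int}(\cC_{(B,\epsilon)})$ because each $b\in B$ has the whole ball $b+\epsilon B_X^\circ$ inside $B_\epsilon\subset\cC_{(B,\epsilon)}$. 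To get $\cC_\alpha\setminus\{0\}\subset\mathrm{int}(\cC_{(B,\epsilon)})$ for small $\alpha$, pick $\epsilon'>0$ so that $B_{\epsilon'}':=\{x:d(x,\cC\cap S_X)\le\epsilon'\}$ after rescaling by the factor $\approx t$ lands strictly inside $B_\epsilon$ — concretely, if $x\in S_{\epsilon'}$ then $d(x,\cC\cap S_X)\le\epsilon'$, and for $c\in\cC\cap S_X$ close to $x$ the point $t\cdot x$ satisfies $d(tx, B)\le d(tx,tc)+d(tc,B)$; since $tc\in\mathrm{cone}(B)$ at "radius" $t$, $d(tc,B)$ is controlled and $\le$ something $\to 0$, so choosing $\epsilon'$ small makes $tx\in B+\epsilon B_X^\circ\subset\mathrm{int}(\cC_{(B,\epsilon)})$, whence $x\in\mathrm{int}(\cC_{(B,\epsilon)})$ by the cone property. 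One then takes $\epsilon':=\min\{\epsilon,\text{that threshold}\}$, and for $0<\alpha<\epsilon'$ one has $\cC_\alpha\subset\cC_{\epsilon'}=\mathrm{cone}(S_{\epsilon'})$, giving the claim. The delicate point throughout is uniform control of the rescaling constants relating the base-centered dilation $\cC_{(B,\epsilon)}$ to the sphere-centered one $\cC_\epsilon$, which is handled by the bound $\delta_B>\epsilon$ and continuity of $f$.
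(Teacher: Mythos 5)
There is a genuine gap, in two places, even though your overall strategy (normalize a base, then relate the two dilations by rescaling) is the same as the paper's. First, for $\cC_{(B,\epsilon)}\subset\cC_{\epsilon}$ you need the normalized point $x/\|x_1\|$ (where $x=x_1+x_2$ with $x_1\in B$, $x_2\in\epsilon B_X$) to land in $(\cC\cap S_X)+\epsilon B_X$, and this requires $\|x_1\|\geq 1$, i.e.\ $\delta_B\geq 1$ --- not merely $\delta_B>\epsilon$. Your base $B=\{f=t\}$ with $t>\epsilon$ does not guarantee this; you notice the problem but then retreat to the inclusion $\{x\colon d(x,\cC)\leq\epsilon\}\subset\mbox{cone}(S_{\epsilon})$, which is false: since $\cero\in\cC$, the left-hand set contains the whole ball $\epsilon B_X$, whereas $\mbox{cone}(S_{\epsilon})=\cC_{\epsilon}$ is pointed for small $\epsilon$ (Lemma \ref{lema:propiedades_entornos_conicos}) and so cannot contain $\pm\epsilon c$ for $c\in\cC\cap S_X$. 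The ``short vectors'' you defer are exactly the counterexamples and cannot be handled separately. The fix is simply to choose the base with $\delta_B\geq 1$ (the paper rescales an arbitrary bounded base $B'$ by $1/\inf_{b'\in B'}\|b'\|$; in your setup take $t\geq 1$), after which your very first computation $\|x/\|x_1\|-x_1/\|x_1\|\|\leq\epsilon/\|x_1\|\leq\epsilon$ closes the argument with no detour.

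Second, in the interior inclusion your rescaling by the \emph{fixed} factor $t$ does not work: for $c\in\cC\cap S_X$ the point of $B$ on the ray of $c$ is $(t/f(c))\,c$, so $d(tc,B)\leq t(1/f(c)-1)\leq t(1/\delta-1)$, which is bounded but does not tend to $0$; the claim that it is ``$\leq$ something $\to 0$'' is wrong. The correct move (the paper's) is to rescale each $x=x_1+x_2\in(\cC\cap S_X)+\alpha B_X$ by the point-dependent $\lambda$ with $\lambda x_1\in B$; since $\|x_1\|=1$ one gets $\lambda\leq M:=\sup_{b\in B}\|b\|$, hence $\lambda x\in B+M\alpha B_X$, and taking $\alpha<\epsilon':=\epsilon/(2M)$ puts $\lambda x$ in $B+\tfrac{\epsilon}{2}B_X$, giving $\cC_{\alpha}\subset\cC_{(B,\epsilon/2)}$; the paper then concludes with $\cC_{(B,\epsilon/2)}\setminus\{\cero\}\subset\mbox{int}(\cC_{(B,\epsilon)})$ from \cite[Lemma 2.1]{Gong1995}. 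Your observation that $B+\epsilon B_X^{\circ}$ is open inside $\cC_{(B,\epsilon)}$ is correct and could substitute for that last citation, but without the point-dependent rescaling and the explicit threshold $\alpha<\epsilon/(2M)$ the quantitative control your argument needs is missing.
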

\begin{proof}
Let $B'$ be an arbitrary bounded base of $\cC$ and take $m':=\inf_{b' \in B'}\| b' \|>0$. Then, the set $B:=\frac{1}{m'} B'$ is a bounded base of $\cC$ satisfying $\delta_B \geq 1$.  Let us prove the inclusion $\cC_{(B,\epsilon)} \subset \cC_{\epsilon}$.  Fix $x \in B+ \epsilon B_X$ and write $x=x_1+x_2$ for $x_1 \in B$ and $x_2 \in \epsilon B_X$. Then $\frac{x_1}{\| x_1\|} \in S_X \cap \cC $  and $\frac{x_2}{\| x_1\|} \in \frac{\epsilon}{\| x_1\|}B_X \subset \epsilon B_X$ as $\| x_1 \| \geq 1$. Consequently, $\frac{x}{\| x_1\|} \in S_X \cap \cC + \epsilon B_X$. Now fix an arbitrary $\lambda>0$.  Then $\lambda x=\| x_1\| \lambda(\frac{1}{\| x_1 \|}x_1+\frac{1}{\| x_1 \|}x_2)\in \mbox{cone}(S_X \cap \cC + \epsilon B_X)$. As a consequence,  $\mbox{cone}(B+ \epsilon B_X) \subset \mbox{cone}(S_X \cap \cC + \epsilon B_X)\subset \cC_{\epsilon}$.  By Lemma~\ref{lema:propiedades_entornos_conicos},  the cone $\cC_{\epsilon}$ is closed,  then $\overline{B+ \epsilon B_X} \subset \cC_{\epsilon}$. Hence
$\cC_{(B,\epsilon)}=\mbox{cone}(\overline{B+ \epsilon B_X})  \subset \cC_{\epsilon}$. Next, we will show the inclusion $\cC_{\alpha}\setminus \{\cero\} \subset \mbox{int}(\cC_{(B,\epsilon)})$ for $\alpha$ small enough. Let $0<M:=\sup_{b \in B } \| b \| < +\infty$ and take $\alpha \in (0, \frac{\epsilon}{2M})$.
Fix $x \in S_X \cap \cC + \alpha B_X$, write $x=x_1+x_2$ for $x_1 \in   S_X \cap \cC $, $x_2 \in \alpha B_X$, and choose $\lambda>0 $ such that $\lambda x_1 \in B$. Since $\| x_1 \| = 1$ and $\lambda x_1 \in B$,  we have $M \geq \| \lambda x_1 \|=  \lambda \| x_1 \|=\lambda$, and then $\lambda x=\lambda x_1+ \lambda x_2 \in B + \lambda \alpha B_X \subset B + \lambda \frac{\epsilon}{2M} B_X \subset B + M\frac{1}{2M} \epsilon B_X =B+ \frac{\epsilon}{2} B_X$. Consequently, $S_X \cap \cC + \alpha B_X \subset \mbox{cone}(B_{\frac{\epsilon}{2}})=\cC_{(B,\frac{\epsilon}{2})}$. By Lemma~\ref{lema:propiedades_entornos_conicos}, $\cC_{(B,\frac{\epsilon}{2})}$ is closed. Then $\overline{S_X \cap \cC + \alpha B_X} \subset \cC_{(B,\frac{\epsilon}{2})}$,  and so $\cC_{\alpha}=\mbox{cone}(\overline{S_X \cap \cC + \alpha B_X}) \subset \cC_{(B,\frac{\epsilon}{2})}$.  Now take $\epsilon':=\frac{\epsilon}{2M}$ and apply \cite[Lemma 2.1]{Gong1995}. Then we have the chain of inclusions $ \cC_{\alpha}\setminus\{\cero\}\subset \cC_{(B,\frac{\epsilon}{2})}\setminus\{\cero\} \subset \mbox{int}(\cC_{(B,\epsilon)})$ for every $\alpha \in (0, \epsilon').$  
\end{proof}

The next two theorems demonstrate how Bishop-Phelps cones can be interpolated between the ordering cone and the dilating cones under consideration. The first theorem improves upon \cite[Theorem 4.4]{Kasimbeyli2010} as it is stated for general normed spaces instead of reflexive Banach spaces. Moreover, it provides an equivalence for the SSP instead of just a necessary condition.
\begin{theorem}\label{thm:tma_separacion_epsilon_conic_neighbourhood}
Let $X$ be a normed space, $\cC\subset X$ a cone, and $0<\epsilon<1$. The following statements are equivalent.
\begin{itemize}
\item[(i)] $(\cC,\cC_{\epsilon})$ has the SSP.
\item[(ii)] There exist $\delta_2>\delta_1> 0$ and $f \in X^*$ such that $(f,\alpha)\in \cC^{a\#}_+$ and $-\cC\setminus \{\cero\}\subset \mbox{int}(S(f,\alpha))=\{x \in X\colon f(x)+\alpha \normx<0\}\subset -\cC_{\epsilon}$ satisfying  $f(x)+\alpha \normx> 0$ whenever $\alpha \in (\delta_1,\delta_2)$, $x \in X\setminus \mbox{int}(-\cC_{\epsilon})$, and $x \not = \cero$.
\end{itemize} 
\end{theorem}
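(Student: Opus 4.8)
The plan is to route the equivalence through the analytic form of the SSP and an interpolated Bishop--Phelps cone. The first step is to record that, by the Hahn--Banach separation theorem applied to $\cero$ and the closed convex set in Definition~\ref{def:estrict_separation} (compare \cite[Theorem 3.1]{GARCIACASTANO2022}), a pair $(\cC,\cK)$ of cones has the SSP if and only if there exist $f\in S_{X^*}$ and reals $0\le\mu<\delta\le1$ such that $f(x)\ge\delta$ for every $x\in\cC\cap S_X$ and $f(x)\le\mu$ for every $x\in(\mbox{bd}(\cK)\cap S_X)_0$. Alongside this I would use the elementary facts that $\cC\cap S_X+\epsilon B_X^\circ\subset S_\epsilon\subset\cC_\epsilon$, so that $\mbox{int}(\cC_\epsilon)\neq\emptyset$ and $\cC\setminus\{\cero\}\subset\mbox{int}(\cC_\epsilon)$; that $\cC_\epsilon$ and $\mbox{bd}(\cC_\epsilon)$ are closed cones by Lemma~\ref{lema:propiedades_entornos_conicos}(i); that $\{x\colon f(x)+\alpha\|x\|<0\}$ is an open subset of $S(f,\alpha)$, equal to $\mbox{int}(S(f,\alpha))$ when $0<\alpha<\|f\|_*$ by Lemma~\ref{lem:S(f,alpha)_base_acotada}(iv).

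For the implication $(ii)\Rightarrow(i)$ I would first normalize, replacing $f$ by $f/\|f\|_*$; this is harmless because $\|f\|_*>\alpha>0$ for $\alpha\in(\delta_1,\delta_2)$ by Lemma~\ref{lem:S(f,alpha)_base_acotada}(ii), and it rescales $\delta_1,\delta_2$ accordingly. For $x\in\cC\setminus\{\cero\}$, the fact that $(f,\alpha)\in\cC^{a\#}_+$ for all $\alpha\in(\delta_1,\delta_2)$ forces $f(x)\ge\delta_2\|x\|$, hence $f\ge\delta_2$ on $\cC\cap S_X$. Substituting $y=-x$ into the last clause of $(ii)$ gives $f(y)\le\delta_1\|y\|$ for every $y\neq\cero$ with $y\notin\mbox{int}(\cC_\epsilon)$; since $\mbox{bd}(\cC_\epsilon)\subset X\setminus\mbox{int}(\cC_\epsilon)$ and $\delta_1>0=f(\cero)$, this yields $f\le\delta_1$ on $(\mbox{bd}(\cC_\epsilon)\cap S_X)_0$. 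As $\delta_2>\delta_1>0$, the set $\mbox{co}(\cC\cap S_X)-\mbox{co}((\mbox{bd}(\cC_\epsilon)\cap S_X)_0)$ is contained in the closed set $\{f\ge\delta_2-\delta_1\}$, which does not contain $\cero$; this is exactly the SSP of $(\cC,\cC_\epsilon)$.

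For the implication $(i)\Rightarrow(ii)$, the SSP produces $f\in S_{X^*}$ and $0\le\mu<\delta\le1$ with $f\ge\delta$ on $\cC\cap S_X$ and $f\le\mu$ on $\mbox{bd}(\cC_\epsilon)\cap S_X$. I would fix an arbitrary $\alpha\in(\mu,\delta)$: then $f\in\cC^\#$ and $f(x)-\alpha\|x\|\ge(\delta-\alpha)\|x\|>0$ on $\cC\setminus\{\cero\}$, so $(f,\alpha)\in\cC^{a\#}_+$; since $\alpha<\delta\le1=\|f\|_*$, Lemma~\ref{lem:S(f,alpha)_base_acotada}(iv) identifies $\mbox{int}(S(f,\alpha))$ with $\{x\colon f(x)+\alpha\|x\|<0\}$, and $f(-x)+\alpha\|x\|\le(\alpha-\delta)\|x\|<0$ for $x\in\cC\setminus\{\cero\}$ gives $-\cC\setminus\{\cero\}\subset\mbox{int}(S(f,\alpha))$. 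The decisive step is the claim that every $x\in S_X$ with $x\notin\mbox{int}(\cC_\epsilon)$ satisfies $f(x)<\alpha$. If $x\in\mbox{bd}(\cC_\epsilon)\cap S_X$ this is the separation inequality. If $x\in S_X\setminus\cC_\epsilon$, I would pick $c\in\cC\cap S_X\subset\mbox{int}(\cC_\epsilon)$ with $c\neq-x$ (the case $x=-c$ is trivial since then $f(x)=-f(c)\le-\delta<\alpha$), put $t^*:=\sup\{t\in[0,1]\colon(1-t)c+tx\in\cC_\epsilon\}\in(0,1)$ and $z:=(1-t^*)c+t^*x$; because $\cC_\epsilon$ is closed, $z\in\mbox{bd}(\cC_\epsilon)$ and $z\neq\cero$, so $\|z\|\le1$ and $z/\|z\|\in\mbox{bd}(\cC_\epsilon)\cap S_X$ force $f(z)\le\mu\|z\|\le\mu$, whereas $f(z)\ge(1-t^*)\delta+t^*f(x)$; therefore $f(x)\le(\mu-(1-t^*)\delta)/t^*<\alpha$ because $\mu<\alpha<t^*\alpha+(1-t^*)\delta$. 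This proves the claim, and by positive homogeneity $C(f,\alpha)\setminus\{\cero\}\subset\mbox{int}(\cC_\epsilon)$. From this inclusion I would deduce $\mbox{int}(S(f,\alpha))\subset-\cC_\epsilon$ (if $f(x)+\alpha\|x\|<0$ then $-x\in C(f,\alpha)\setminus\{\cero\}\subset\mbox{int}(\cC_\epsilon)$) and that $f(x)+\alpha\|x\|>0$ whenever $x\neq\cero$ and $x\notin\mbox{int}(-\cC_\epsilon)$ (otherwise $-x\in C(f,\alpha)\setminus\{\cero\}\subset\mbox{int}(\cC_\epsilon)$, i.e.\ $x\in\mbox{int}(-\cC_\epsilon)$). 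Since all of this holds for every $\alpha\in(\mu,\delta)$, I would conclude by choosing any $\delta_1,\delta_2$ with $\mu<\delta_1<\delta_2<\delta$, noting $\delta_1>0$ because $\mu\ge0$.

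I expect the main obstacle to be precisely the displayed claim in the last paragraph, namely the upper bound for $f$ on $S_X\setminus\cC_\epsilon$. My device for it is the first point at which the segment $[c,x]$ leaves $\cC_\epsilon$, which lies on $\mbox{bd}(\cC_\epsilon)$ thanks only to the closedness of $\cC_\epsilon$ (Lemma~\ref{lema:propiedades_entornos_conicos}(i)); notably this argument does not use any convexity of $\cC_\epsilon$, which for $\epsilon$-conic neighbourhoods may genuinely fail.
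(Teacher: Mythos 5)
Your proposal is correct, and it follows the same broad strategy as the paper (linear separation of the two convex hulls, interpolation of Bishop--Phelps cones, and a segment/boundary-crossing argument), but it is organized quite differently. The paper routes both implications through the external equivalence \cite[Theorem 3.1]{GARCIACASTANO2022}, which already delivers, for all $\alpha$ in an interval, the pairs $(f,\alpha)\in\cC^{a\#}_+$ together with the sign conditions $f(x)+\alpha\|x\|<0$ on $-\cC\setminus\{\cero\}$ and $>0$ off $\mbox{int}(-\cC_\epsilon)$; the only new work in the paper is the inclusion $\mbox{int}(S(f,\alpha))\subset-\cC_\epsilon$, obtained by sliding along a segment from a point of $-\cC$ to a hypothetical point of $\mbox{int}(S(f,\alpha))\setminus(-\cC_\epsilon)$ and using convexity of $f+\alpha\|\cdot\|$ at the boundary crossing. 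You instead re-derive the functional from scratch: Hahn--Banach separation of $\cero$ from the closed convex set in Definition~\ref{def:estrict_separation} gives $f\in S_{X^*}$ with $f\ge\delta$ on $\cC\cap S_X$ and $f\le\mu$ on $(\mbox{bd}(\cC_\epsilon)\cap S_X)_0$, and your boundary-crossing argument (first exit time $t^*$ of the segment $[c,x]$ from the closed set $\cC_\epsilon$) is deployed one level earlier, to prove $f<\alpha$ on $S_X\setminus\mbox{int}(\cC_\epsilon)$, i.e.\ $C(f,\alpha)\setminus\{\cero\}\subset\mbox{int}(\cC_\epsilon)$, from which both the sublevel-set inclusion and the positivity clause of (ii) fall out by homogeneity; the converse implication is the elementary half of the Hahn--Banach characterization. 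What each approach buys: the paper's proof is shorter because the cited theorem absorbs most of the analysis; yours is self-contained, makes visible that only closedness of $\cC_\epsilon$ (Lemma~\ref{lema:propiedades_entornos_conicos}(i)) and never its convexity is used, and yields the slightly sharper intermediate fact $C(f,\alpha)\setminus\{\cero\}\subset\mbox{int}(\cC_\epsilon)$ for every $\alpha\in(\mu,\delta)$. The small delicate points in your argument (that $t^*\in(0,1)$ and the exit point $z$ is a nonzero boundary point, handled by excluding $x=-c$; that $\mu\ge0$ so $f(z)\le\mu\|z\|\le\mu$) are all treated correctly.
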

\begin{proof}
(i)$\Rightarrow$(ii) Since $(\cC,\cC_{\epsilon})$ has the SSP, it follows that $(-\cC,-\cC_{\epsilon})$ has the SSP. By \cite[Theorem  3.1]{GARCIACASTANO2022}, there exist $\delta_2>\delta_1> 0$ and $f \in X^*$ such that for every $\alpha \in (\delta_1,\delta_2)$ we have $(f,\alpha)\in \cC^{a\#}_+$ satisfying $f(x)+\alpha \normx<0< f(y)+\alpha \| y \|$ whenever $x \in -\cC\setminus\{\cero\}$, $y \in X\setminus \mbox{int}(-\cC_{\epsilon})$, $y \not=\cero$. By Lemma \ref{lem:S(f,alpha)_base_acotada}, we have $\alpha <\| f \|_*$ and int$(S(f,\alpha))=\{x\in X \colon f(x)+\alpha \normx<0\}\not =\emptyset$. To finish the proof we will show that int$(S(f,\alpha))\subset -\cC_{\epsilon}$. Pick any $x\in \mbox{int}(S(f,\alpha))$ and assume, contrary to our claim, that $x \not \in -\cC_{\epsilon}$. Choose $z\in -\cC\subset -\cC_{\epsilon}$ and $\lambda \in (0,1)$ such that $y=\lambda x +(1-\lambda)z \in \mbox{bd}(-\cC_{\epsilon})$. Then $f(y)+\alpha \| y \| \leq \lambda(f(x)+\alpha \normx)+(1-\lambda)(f(z)+\alpha \| z \|)<0$, a contradiction.  

(ii)$\Rightarrow$(i)  A direct consequence of \cite[Theorem 3.1]{GARCIACASTANO2022}.
\end{proof}
Now we state the analogous result to the previous one, but for Henig dilating cones. The proof is a direct adaptation of the previous one, so it is omitted.
\begin{theorem}\label{thm:tma_separacion_Henig_dilating_cone}
Let $X$ be a normed space, $\cC\subset X$ a convex cone,  $B$ a bounded base of $\cC$,  and $0<\epsilon <\min\{1,\delta_B\}$. The following are equivalent.
\begin{itemize}
\item[(i)] $(\cC,\cC_{(B,\epsilon)})$ has the SSP.
\item[(ii)] There exist $0<\delta_1<\delta_2$ and $f \in X^*$ such that $(f,\alpha)\in \cC^{a\#}_+$ and $-\cC\setminus \{\cero\}\subset \mbox{int}(S(f,\alpha))=\{x \in X\colon f(x)+\alpha \normx<0\}\subset -\cC_{(B,\epsilon)}$ satisfying $f(x)+\alpha \normx> 0$ whenever  $\alpha \in (\delta_1,\delta_2)$,  $x \in X\setminus \mbox{int}(-\cC_{(B,\epsilon)})$, $x \not = \cero$.
\end{itemize} 
\end{theorem}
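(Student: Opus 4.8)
The plan is to transcribe the proof of Theorem~\ref{thm:tma_separacion_epsilon_conic_neighbourhood} essentially line by line, replacing the $\epsilon$-conic neighbourhood $\cC_{\epsilon}$ everywhere by the Henig dilating cone $\cC_{(B,\epsilon)}$, and to check that every tool invoked there survives the substitution. For (i)$\Rightarrow$(ii) I would first use the trivial equivalence that $(\cC,\cC_{(B,\epsilon)})$ has the SSP if and only if $(-\cC,-\cC_{(B,\epsilon)})$ does, and then feed the pair $(-\cC,-\cC_{(B,\epsilon)})$ into \cite[Theorem~3.1]{GARCIACASTANO2022} to obtain $0<\delta_1<\delta_2$ and $f\in X^*$ such that, for every $\alpha\in(\delta_1,\delta_2)$, one has $(f,\alpha)\in\cC^{a\#}_+$ together with $f(x)+\alpha\|x\|<0<f(y)+\alpha\|y\|$ for all $x\in-\cC\setminus\{\cero\}$ and all $y\in X\setminus\mbox{int}(-\cC_{(B,\epsilon)})$ with $y\neq\cero$. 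Lemma~\ref{lem:S(f,alpha)_base_acotada} then yields $\alpha<\|f\|_*$ and $\mbox{int}(S(f,\alpha))=\{x\in X\colon f(x)+\alpha\|x\|<0\}\neq\emptyset$, and the first inequality gives $-\cC\setminus\{\cero\}\subset\mbox{int}(S(f,\alpha))$.

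It then remains only to prove the inclusion $\mbox{int}(S(f,\alpha))\subset-\cC_{(B,\epsilon)}$, and this is the one place where the replacement must be examined. Here I would argue as in the earlier proof: since $B\subset B_{\epsilon}$ one has $\cC=\mbox{cone}(B)\subset\mbox{cone}(B_{\epsilon})=\cC_{(B,\epsilon)}$, hence $-\cC\subset-\cC_{(B,\epsilon)}$, and $\cC_{(B,\epsilon)}$ is closed and convex by \cite[Lemma~3.4]{Gopfert2004} (recorded in Lemma~\ref{lema:propiedades_entornos_conicos}(ii)), because $B$ is a bounded base and $0<\epsilon<\delta_B$. So if some $x\in\mbox{int}(S(f,\alpha))$ failed to lie in $-\cC_{(B,\epsilon)}$, we could pick $z\in-\cC\setminus\{\cero\}$ and $\lambda\in(0,1)$ with $y:=\lambda x+(1-\lambda)z\in\mbox{bd}(-\cC_{(B,\epsilon)})$; convexity of $t\mapsto f(t)+\alpha\|t\|$ forces $f(y)+\alpha\|y\|<0$, while $y\in X\setminus\mbox{int}(-\cC_{(B,\epsilon)})$ and $y\neq\cero$ force $f(y)+\alpha\|y\|>0$, a contradiction. (One may equally skip the segment: $x\notin-\cC_{(B,\epsilon)}$ already gives $x\in X\setminus\mbox{int}(-\cC_{(B,\epsilon)})$ and $x\neq\cero$, hence $f(x)+\alpha\|x\|>0$, contradicting $x\in\mbox{int}(S(f,\alpha))$.)

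For (ii)$\Rightarrow$(i), the conditions listed in (ii) assert precisely that $f(x)+\alpha\|x\|<0$ on $-\cC\setminus\{\cero\}$ and $f(x)+\alpha\|x\|>0$ on $(X\setminus\mbox{int}(-\cC_{(B,\epsilon)}))\setminus\{\cero\}$ for every $\alpha\in(\delta_1,\delta_2)$, which is exactly the hypothesis of \cite[Theorem~3.1]{GARCIACASTANO2022} applied to the pair $(-\cC,-\cC_{(B,\epsilon)})$; hence that pair, and therefore $(\cC,\cC_{(B,\epsilon)})$, enjoys the SSP. I do not expect any genuine obstacle: the only bookkeeping point is that in the $\cC_{\epsilon}$-case the dilating cone is automatically closed by Lemma~\ref{lema:propiedades_entornos_conicos}(i), whereas closedness and convexity of $\cC_{(B,\epsilon)}$ rely on $B$ being a bounded base and $\epsilon<\delta_B$; since exactly these hypotheses are in force, the transcription of the earlier proof carries over unchanged, which is why the statement can be left without a detailed proof.
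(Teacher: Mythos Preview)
Your proposal is correct and follows exactly the approach the paper intends: the paper omits the proof, stating only that it is a direct adaptation of the proof of Theorem~\ref{thm:tma_separacion_epsilon_conic_neighbourhood}, and you carry out precisely that transcription, correctly noting that the only point to verify is the closedness (and convexity) of $\cC_{(B,\epsilon)}$, which follows from the hypotheses $B$ bounded and $\epsilon<\delta_B$ via \cite[Lemma~3.4]{Gopfert2004}. Your parenthetical shortcut for the inclusion $\mbox{int}(S(f,\alpha))\subset -\cC_{(B,\epsilon)}$ is in fact slightly cleaner than the segment argument used in the paper's proof of Theorem~\ref{thm:tma_separacion_epsilon_conic_neighbourhood}.
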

\begin{corollary} \label{coro:inclusiones_y_SSP}
Let $X$ be a normed space and $\cC$, $\cK_1, \cK_2$ cones such that $\cC\cap \cK_1\not =~\emptyset$.  If $(\cC,\cK_1)$ has the SSP and  $ \cK_1 \subset \cK_2$ , then $(\cC,\cK_2)$  has the SSP. 
\end{corollary}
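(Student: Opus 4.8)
The plan is \emph{not} to argue straight from Definition~\ref{def:estrict_separation}. A Hahn--Banach functional separating $\cero$ from $\mbox{co}(\cC\cap S_X)-\mbox{co}((\mbox{bd}(\cK_1)\cap S_X)_0)$ is only controlled on $\mbox{bd}(\cK_1)\cap S_X$, whereas passing from $\cK_1$ to $\cK_2$ forces us to control it on the larger set $\mbox{bd}(\cK_2)\cap S_X$, which lies in $S_X\setminus \mathrm{int}(\cK_1)$ but may meet it far from $\cK_1$. Instead, I would route the argument through \cite[Theorem 3.1]{GARCIACASTANO2022} (used exactly as in the proof of Theorem~\ref{thm:tma_separacion_epsilon_conic_neighbourhood}), whose characterizing condition splits into a part concerning only the first cone $\cC$ together with a separating functional, and a part concerning the second cone only through its interior; the latter part is manifestly monotone under $\cK_1\subset\cK_2$.

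First, using the equivalence ``$(\cC,\cK)$ has the SSP iff $(-\cC,-\cK)$ has the SSP'' recorded above the corollary, replace $(\cC,\cK_1)$ by $(-\cC,-\cK_1)$; note $(-\cC)\cap(-\cK_1)=-(\cC\cap\cK_1)\neq\emptyset$. Applying \cite[Theorem 3.1]{GARCIACASTANO2022} to this pair, there exist $\delta_2>\delta_1>0$ and $f\in X^*$ such that, for every $\alpha\in(\delta_1,\delta_2)$, one has $(f,\alpha)\in\cC^{a\#}_+$, $f(x)+\alpha\normx<0$ for all $x\in-\cC\setminus\{\cero\}$, and $f(y)+\alpha\normx>0$ for all $y\in X\setminus\mathrm{int}(-\cK_1)$ with $y\neq\cero$. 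From $\cK_1\subset\cK_2$ we get $-\cK_1\subset-\cK_2$, hence $\mathrm{int}(-\cK_1)\subset\mathrm{int}(-\cK_2)$ and therefore $X\setminus\mathrm{int}(-\cK_2)\subset X\setminus\mathrm{int}(-\cK_1)$. Consequently the inequality $f(y)+\alpha\normx>0$ persists for every $y\in X\setminus\mathrm{int}(-\cK_2)$, $y\neq\cero$, while the conditions $(f,\alpha)\in\cC^{a\#}_+$ and $f(x)+\alpha\normx<0$ on $-\cC\setminus\{\cero\}$ do not involve $\cK_1$ at all and are unchanged. Thus $f,\delta_1,\delta_2$ witness the characterizing condition of \cite[Theorem 3.1]{GARCIACASTANO2022} for the pair $(-\cC,-\cK_2)$, where $(-\cC)\cap(-\cK_2)\supseteq(-\cC)\cap(-\cK_1)\neq\emptyset$. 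The converse implication of that theorem then gives that $(-\cC,-\cK_2)$ has the SSP, and a final appeal to the sign equivalence yields that $(\cC,\cK_2)$ has the SSP.

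The only delicate points are bookkeeping ones: keeping the sign conventions straight (working with $-\cC$ and $-\cK_i$, exactly as in the proof of Theorem~\ref{thm:tma_separacion_epsilon_conic_neighbourhood}), and checking that the mild non-emptiness hypotheses needed to invoke \cite[Theorem 3.1]{GARCIACASTANO2022} in both directions hold --- which is where the assumption $\cC\cap\cK_1\neq\emptyset$ enters. The genuinely substantive step, that the SSP is equivalent to a Bishop--Phelps-type separation of $\cC$ from \emph{all} of $X\setminus\mathrm{int}(\cK_1)$, is supplied by that cited theorem, after which monotonicity in the second cone is immediate.
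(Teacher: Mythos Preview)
Your proposal is correct and follows essentially the same route as the paper: both arguments invoke the characterization of the SSP from \cite{GARCIACASTANO2022} to extract a pair $(f,\alpha)$ whose strict positivity on $X\setminus\mathrm{int}(-\cK_1)$ is then transferred to $X\setminus\mathrm{int}(-\cK_2)$ (respectively $\mathrm{bd}(-\cK_2)$) by the monotonicity coming from $\cK_1\subset\cK_2$, after which the converse implication of the cited theorem yields the SSP for $(\cC,\cK_2)$. The only cosmetic differences are that the paper cites \cite[Theorem~3.3]{GARCIACASTANO2022} for the forward direction and \cite[Theorem~3.1]{GARCIACASTANO2022} for the return, without making the passage to $(-\cC,-\cK_i)$ explicit, whereas you use Theorem~3.1 in both directions together with the sign equivalence; also, you should write $f(y)+\alpha\|y\|>0$ rather than $f(y)+\alpha\normx>0$.
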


%\color{black}

\begin{proof}
Assume that $(\cC,\cK_1)$ has the SSP. By  \cite[Theorem 3.3]{GARCIACASTANO2022}, there exist $\delta_2>\delta_1> 0$ and $f \in X^*$ such that for every $\alpha \in (\delta_1,\delta_2)$ we have   $(f,\alpha) \in \cC^{a\#}_{+}$ and $-\cC\setminus \{\cero\}\subset \mbox{int}(S(f,\alpha))\subset -\cK_1$ satisfying $f(x)+\alpha \normx > 0$ for every $x \in X \setminus \mbox{int}(-\cK_1)$, $x \not = \cero$.  On the other hand, the inclusion $\cK_1 \subset \cK_2$ implies that $\mbox{bd}(-\cK_2)\subset   X\setminus \mbox{int}(-\cK_1)$. Now applying \cite[Theorem 3.1]{GARCIACASTANO2022}, we  have that $(\cC,\cK_2)$ has the SSP.
\end{proof}
In the following section, we will refer to the following observation on multiple occasions. This observation showcases the properties and interconnections of the two types of dilating cones when a given pair consisting of the ordering cone and an $\epsilon$-conic neighborhood satisfies the SSP. Therefore, it can be regarded as one of the key conclusions of this section. This observation is a consequence of  Proposition \ref{prop:separacion_implica_base_acotada}, Lemmas~\ref{lema:propiedades_entornos_conicos} and \ref{lema:dilatacion_dentro_entorno}, and Corollary \ref{coro:inclusiones_y_SSP}. 
\begin{remark}\label{rem:resumen_existencia_conos}
Let $X$ be  a normed space,  $\cC\subset X$ a convex cone, and  $0<\epsilon<1$.  If $(\cC,\cC_{\epsilon})$ has the SSP, then there exist a bounded base $B$ of $\cC$ and   $0<\epsilon' <\epsilon$ such that $\epsilon<\delta_B$, and the following properties hold:
\begin{itemize}
\item[(i)] $\cC_{\alpha}\setminus \{\cero\} \subset \mbox{int}(\cC_{(B,\epsilon)}) \subset \cC_{(B,\epsilon)} \subset \cC_{\epsilon}$ for every $0<\alpha <\epsilon'$.
\item[(ii)] The cone $\cC_{(B,\epsilon)}$  is closed and convex.
\item[(iii)] If  $(\cC,\cC_{\beta})$ satisfies the SSP for some $0<\beta < \epsilon'$, then $(\cC,\cC_{(B,\epsilon)})$ has the SSP.
\end{itemize}
\end{remark}

\section{Existence and density results}\label{sec:exist_henig}
In this section, we establish the main results of the paper. We present two types of results: first, existence results for Henig global proper efficient points, specifically sufficient conditions for their existence; second, we provide sufficient conditions for the density of these proper efficient points in the efficient set. The hypotheses in these results mainly consist of two parts: the SSP between the family of pairs given by the ordering cone and its conic neighborhoods, and some condition of weak compactness for the set under consideration. In the first theorem, it is not necessary to assume the convexity or boundedness of the involved set, and in its proof, we will use the following  fact.
\begin{fact}\label{fact_subconjunto_compacto}
Let $X$ be a Hausdorff topological space and $A$, $B$, $C\subset X$ subsets such that $C$ is closed and $C\subset B$.  If $A\cap B$ is compact, then so is $A\cap C$.
\end{fact}
%\begin{proof}
%Since $A\cap B$ is compact and $X$ is Hausdorff, it follows that $A\cap B$ is closed. Then $A\cap B\cap C$ is a closed subset of the compact set $A\cap B$ and so, $A\cap B\cap C$ is compact. Now,  the inclusion $C\subset B$, gives the equality $A\cap C=A\cap B\cap C$ that finishes the proof.
%\end{proof}

%In this section we establish the main results of the work. In the first part, we provide some sufficient  conditions for the existence of Henig global proper efficient points under SSP assuming a weak compactness condition. 
%\color{mypink1}
%\noindent [En el siguiente teorema creo que estaría bien cambiar el $\alpha$ del enunciado por un $\varepsilon$, por coherencia con el  Remark \ref{rem:resumen_existencia_conos}, y porque luego el $\alpha$ se usa para otra cosa en la demostración]
%\color{black}
\begin{theorem}\label{thm:Existence_He_Hartley_compact_nuevo}
Let $X$ be a partially ordered normed space, $\cC$ the ordering cone, and $A\subset X$ a subset. Assume that $(\cC,\cC_{\varepsilon})$ has the SSP for every $0< \varepsilon<1$.  If there exist $x_0\in A$ and $0<\delta<1$ such that the section $(x_0-\cC_{\delta})\cap A$ is  weak compact, then %$\mbox{GHe}(A,\cC)\not = \emptyset$. In particular, 
$(x_0-\cC_{\delta})\cap A\cap\mbox{GHe}(A,\cC)\not = \emptyset$.
\end{theorem}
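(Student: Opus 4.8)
The plan is to produce a nested sequence of Henig dilating cones whose Min-sets, intersected with the weakly compact section, are nonempty, closed, and decreasing, so that their intersection is nonempty; any point in the intersection will be shown to lie in $\mbox{GHe}(A,\cC)$. First I would observe, using Remark~\ref{rem:resumen_existencia_conos} applied to the pair $(\cC,\cC_{\delta})$ (which has the SSP by hypothesis), that there is a bounded base $B$ of $\cC$ and some $0<\epsilon'<\delta$ with $\delta<\delta_B$ such that $\cC_{(B,\delta)}$ is closed and convex, $\cC_\alpha\setminus\{\cero\}\subset\mbox{int}(\cC_{(B,\delta)})\subset\cC_{(B,\delta)}\subset\cC_\delta$ for all $0<\alpha<\epsilon'$, and (since $(\cC,\cC_\beta)$ has the SSP for every $\beta$) $(\cC,\cC_{(B,\delta)})$ has the SSP. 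The key point is that $\cC_{(B,\delta)}$ is a \emph{convex} cone with $\cC\setminus\{\cero\}\subset\mbox{int}(\cC_{(B,\delta)})$, so any $x\in\mbox{Min}(A,\cC_{(B,\delta)})$ is automatically in $\mbox{GHe}(A,\cC)$.

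Next I would set up the approximation inside the section $S:=(x_0-\cC_\delta)\cap A$, which is weakly compact. For each small $\alpha>0$ consider $M_\alpha:=S\cap\mbox{Min}(A,\cC_\alpha)$; more useful is to work with sublevel sets of the functional $(f,\alpha_0)\in\cC^{a\#}_+$ coming from Theorem~\ref{thm:tma_separacion_Henig_dilating_cone} (or Theorem~\ref{thm:tma_separacion_epsilon_conic_neighbourhood}): the set $S(f,\alpha_0)=\{x: f(x)+\alpha_0\|x\|\le 0\}$ satisfies $-\cC\setminus\{\cero\}\subset\mbox{int}(S(f,\alpha_0))\subset-\cC_{(B,\delta)}$. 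The standard device is to minimize the weakly lower semicontinuous convex functional $x\mapsto f(x)+\alpha_0\|x\|$ over the weakly compact set $S$; it attains its minimum at some $\bar x\in S$. I claim $\bar x\in\mbox{Min}(A,\cC_{(B,\delta)})$: if not, there is $a\in A$ with $\bar x-a\in\cC_{(B,\delta)}\setminus\{\cero\}$, hence (using $\cC_{(B,\delta)}\subset\cC_\delta$ and the fact that $\bar x\in x_0-\cC_\delta$ together with $\cC_\delta$ being a cone — here I must be slightly careful because $\cC_\delta$ need not be convex, but $a=\bar x-(\bar x-a)\in x_0-\cC_\delta-\cC_{(B,\delta)}$ and convexity of $\cC_{(B,\delta)}$ lets me reduce to the dilating cone) $a\in S$, while $a-\bar x\in-\cC_{(B,\delta)}$ forces, via the separation $\{x:f(x)+\alpha_0\|x\|<0\}\supset$ a neighborhood argument, $f(a)+\alpha_0\|a\|<f(\bar x)+\alpha_0\|\bar x\|$, contradicting minimality. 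Then $\bar x\in\mbox{Min}(A,\cC_{(B,\delta)})\subset\mbox{GHe}(A,\cC)$, and $\bar x\in S=(x_0-\cC_\delta)\cap A$, which is exactly the conclusion.

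The main obstacle I anticipate is the step asserting that if $a-\bar x\in-\cC_{(B,\delta)}\setminus\{\cero\}$ then strictly $f(a)+\alpha_0\|a\|<f(\bar x)+\alpha_0\|\bar x\|$: the functional $f(\cdot)+\alpha_0\|\cdot\|$ is nonnegative on $\cC_{(B,\delta)}$ only in a non-strict sense on the boundary, so I need the sharper information $-\cC_{(B,\delta)}\setminus\{\cero\}\subset\mbox{int}(S(f,\alpha_0))$, i.e. strict inequality, which is precisely what the SSP (through Theorem~\ref{thm:tma_separacion_Henig_dilating_cone}, clause~(ii), combined with Lemma~\ref{lem:S(f,alpha)_base_acotada}(iv)) buys us once we know $(\cC,\cC_{(B,\delta)})$ has the SSP. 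I would also need to double-check the measurability/attainment detail that $S$ being weakly compact and the functional being convex and norm-continuous makes it weakly lsc, hence attains its infimum on $S$; and the set-membership bookkeeping $a\in(x_0-\cC_\delta)$, for which I will use $\bar x\in x_0-\cC_\delta$, $\bar x-a\in\cC_{(B,\delta)}\subset\cC_\delta$ and Lemma~\ref{lema:dilatacion_dentro_entorno}/Remark~\ref{rem:resumen_existencia_conos}(i) to stay inside $x_0-\cC_\delta$ — this is where Fact~\ref{fact_subconjunto_compacto} may be invoked to transfer compactness from $(x_0-\cC_\delta)\cap A$ to the relevant closed subset.
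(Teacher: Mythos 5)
Your overall strategy (use the SSP to interpolate a Bishop--Phelps functional, minimize $g(x)=f(x)+\alpha_0\|x\|$ over a weakly compact section, and exhibit the minimizer as a Henig global proper efficient point) is the same as the paper's, but the step that identifies the witness cone is wrong. You claim that if $a\in A$, $a\neq\bar x$, and $a-\bar x\in-\cC_{(B,\delta)}\setminus\{\cero\}$, then $g(a)<g(\bar x)$, and you assert that the inclusion needed for this, namely $-\cC_{(B,\delta)}\setminus\{\cero\}\subset\mbox{int}(S(f,\alpha_0))$, is ``precisely what the SSP buys us.'' It is not: Theorem~\ref{thm:tma_separacion_Henig_dilating_cone}(ii) gives the \emph{opposite} containment, $-\cC\setminus\{\cero\}\subset\mbox{int}(S(f,\alpha))\subset-\cC_{(B,\delta)}$, i.e.\ the Bishop--Phelps sublevel cone sits \emph{between} $-\cC$ and $-\cC_{(B,\delta)}$. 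Points of $-\cC_{(B,\delta)}\setminus\{\cero\}$ lying outside $S(f,\alpha)$ satisfy $g>0$, so minimality of $g$ at $\bar x$ does not exclude a competitor $a$ with $a-\bar x\in-\cC_{(B,\delta)}\setminus\{\cero\}$, and your conclusion $\bar x\in\mbox{Min}(A,\cC_{(B,\delta)})$ does not follow. The correct conclusion (and the paper's) is weaker but sufficient: minimizing $g$ only rules out competitors in the direction of $\mbox{int}(S(f,\alpha))$, so the minimizer $x_1$ satisfies $(x_1+I(f,\alpha))\cap(x_0-\cC_{(B,\delta)})\cap A=\{x_1\}$ with $I(f,\alpha):=\mbox{int}(S(f,\alpha))\cup\{\cero\}$; since $-I(f,\alpha)$ is a convex cone whose interior contains $\cC\setminus\{\cero\}$, this already certifies $x_1\in\mbox{GHe}$ with witness cone $-I(f,\alpha)$ rather than $\cC_{(B,\delta)}$.

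A secondary problem: you minimize over $S=(x_0-\cC_\delta)\cap A$, but $\cC_\delta$ need not be convex, so $-\cC_\delta-\cC_{(B,\delta)}\not\subset-\cC_\delta$ in general and the competitor $a=\bar x-(\bar x-a)$ need not stay in $S$; the phrase ``convexity of $\cC_{(B,\delta)}$ lets me reduce to the dilating cone'' does not resolve this. The fix is to minimize from the start over the smaller section $(x_0-\cC_{(B,\delta)})\cap A$, which is weakly compact by Fact~\ref{fact_subconjunto_compacto} because $\cC_{(B,\delta)}$ is closed and convex, hence weakly closed. Then convexity of $\cC_{(B,\delta)}$ together with $I(f,\alpha)\subset S(f,\alpha)\subset-\cC_{(B,\delta)}$ gives $x_1+I(f,\alpha)\subset x_0-\cC_{(B,\delta)}$, which is exactly what lets you pass from minimality on the section to $(x_1+I(f,\alpha))\cap A=\{x_1\}$ and hence to $x_1\in(x_0-\cC_{\delta})\cap A\cap\mbox{GHe}(A,\cC)$.
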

\begin{proof}
We fix  $x_0\in A$ and $0<\delta<1$ from the statement. By Remark \ref{rem:resumen_existencia_conos}, there exists a bounded base $B$ of $\cC$ such that $\cC_{(B,\delta)} \subset \cC_{\delta}$, the cone $\cC_{(B,\delta)}$ is closed and convex, and $(\cC,\cC_{(B,\delta)})$ has the SSP. By convexity,  the cone $\cC_{(B,\delta)}$  is also weak closed. On the other hand,  the set $(x_0-\cC_{\delta})\cap A$ is  weak compact and hence, by Fact \ref{fact_subconjunto_compacto}, so is $(x_0-\cC_{(B,\delta)})\cap A$. Since $(\cC,\cC_{(B,\delta)})$ has the SSP,  Theorem~\ref{thm:tma_separacion_Henig_dilating_cone} applies and there exists $(f,\alpha)\in \cC^{a\#}_+$ such that $-\cC\setminus \{\cero\}\subset \mbox{int}(S(f,\alpha))=\{x \in X\colon f(x)+\alpha \normx< 0\}\subset -\cC_{(B,\delta)}$.  Now, we define the function  $g_{f,\alpha}:X\rightarrow\R$ by $g_{f,\alpha}(x):=f(x)+\alpha\| x \|$ for every $x \in X$.  As $\| \cdot \|$ is weak lower semicontinuous, so is $g_{f,\alpha}$.  By \cite[Theorem 2.43]{ali-bor},  the function $g_{f,\alpha}$ restricted to $(x_0-\cC_{(B,\delta)})\cap A$ attains its minimum at some $x_1 \in (x_0-\cC_{(B,\delta)})\cap A$. 
%We define the function $G_{f,\alpha}:X^{**}\rightarrow\R$ by $G_{f,\alpha}(T):=T(f)+\alpha\| T \|_{**}$ for every $T \in X^{**}$.  Then the function $g_{f,\alpha}$ restricted to $(x_0-\cC_{B,\epsilon})\cap A\subset X$ attains its minimum at the same $x_1 \in (x_0-\cC_{B,\epsilon})\cap A$.  
Hence for every $x \in (x_0-\cC_{(B,\delta)})\cap A$, $x \not = x_1$, we have $f(x)+\alpha \normx\geq f(x_1)+\alpha \| x_1\|$. This implies $0\leq f(x-x_1)+\alpha[\normx-\| x_1\|]\leq f(x-x_1)+\alpha \| x-x_1\|$. Denote $ I(f, \alpha)=\mbox{int}(S(f,\alpha))\cup \{\cero \}$. Then $x-x_1\not \in I(f,\alpha)$ for every $x \in (x_0-\cC_{(B,\delta)})\cap A$ such that $x\not = x_1$, yielding
\begin{equation}\label{eq:1_prueba_thm_Hartley_compact_nuevo}
(x_1+I(f, \alpha))\cap(x_0-\cC_{(B,\delta)})\cap A=\{x_1\}.
\end{equation} 
Then $x_1 \in \mbox{Min}((x_0-\cC_{(B,\delta)})\cap A,-I(f, \alpha))$. Now, as $-\cC\setminus \{\cero\}\subset \mbox{int}(S(f,\alpha))$ and $I(f, \alpha)$ is a convex cone, we get that $x_1 \in \mbox{GHe}((x_0-\cC_{(B,\delta)})\cap A,\cC)$.  Next, we will prove the equality $(x_1+I(f, \alpha))\cap A=\{x_1\}$ which leads to $x_1 \in \mbox{GHe}(A,\cC)$. Indeed,  $x_1 \in x_0-\cC_{(B,\delta)}$, $I(f, \alpha) \subset S(f,\alpha)\subset -\cC_{(B,\delta)}$ and the latter is a convex cone, so $-\cC_{(B,\delta)}-\cC_{(B,\delta)}\subset -\cC_{(B,\delta)}$. Hence, $(x_1+I(f, \alpha))\cap A \subset (x_0-\cC_{(B,\delta)}+I(f, \alpha))\cap A\subset (x_0-\cC_{(B,\delta)})\cap A$.  Finally,  by (\ref{eq:1_prueba_thm_Hartley_compact_nuevo}), we get $(x_1+I(f, \alpha)) \cap A\subset (x_1+I(f, \alpha))\cap(x_0-\cC_{(B,\delta)})\cap A=\{x_1\}$.
\end{proof}

The following example illustrates the above theorem.

\begin{example} \label{ej_existencia_con_debil_compacidad}
Let us consider $(\mathbb{R}^2, \| \, \|_2)$,  the cone $\cC:=\{(x,y)\in \mathbb{R}^2: y\geq | x |\}$ ordering the space, and the set
$A=\{(x,y)\in \mathbb{R}^2: -\frac{\pi}{2}\leq x \leq \frac{\pi}{2}, y\geq \sin(-| x |) \}$. Then, $(\cC,\cC_{\epsilon})$ has the SSP for every $0<\epsilon<1$, the set $((0,0)-\cC_{\frac{\sqrt{2}}{2}})\cap A$ is weak compact, and 
\begin{equation} \label{eq1}
 \text{GHe}(A,\cC)=\{(x,y)\in \mathbb{R}^2: -\frac{\pi}{2}\leq x \leq \frac{\pi}{2},  y=\sin(-| x |) \} \setminus \{ (0,0)\}.
\end{equation}

\end{example}

\begin{proof}
First, let us check that $(\cC, \cC_{\varepsilon})$ has the SSP for every $0<\varepsilon<1$. Notice that $\cC$ is a Bishop-Phelps cone.  Indeed, taking $\bar{f}:=(0,1)$ and  $\bar{\alpha}=\frac{\sqrt{2}}{2}$, we have that $\cC=C(\bar{f}, \bar{\alpha})$. Taking into account the particular form of the cone $\cC$,  it is easy to check that  for every $0<\varepsilon<1$, there exists $\rho>0$ such that $C(\bar{f}, \bar{\alpha}-\rho)\subset C_{\varepsilon}$. Now,  Example~\ref{EjempoBPtieneSSP} and  Corollary \ref{coro:inclusiones_y_SSP} yield that $(\cC, \cC_{\varepsilon})$ has the SSP for every $0<\varepsilon<1.$  
%\color{mypink1}
%\noindent [Los  cálculos para probar que ``for every $0<\varepsilon<1$ there exists a $\delta>0$ such that $C(\bar{f}, \bar{\alpha}-\delta)\subset C_{\varepsilon}$"  no son complicados, pero son un poco largos para detallarlos aquí. Los tengo en Maple, pero el asunto es que hay que distinguir 2 casos, y además, al resolver ecuaciones del tipo $x^2+(\frac{\sqrt{2}}{2}-\delta)^2=1$ para luego tomar limite con $\delta\rightarrow 0 $ salen expresiones muy largas. De momento, si te parece, no lo ponemos, y si las pidiera el subnormal del referee, ya lo escribo y se lo enviamos despues. No creo que las pida, pues intuitivamente con un simple dibujo se ve que se cumple, pero al verificarlo se alarga mucho]
%\color{blue}

To prove the second claim, we take  $x_0=(0,0)$ and $\delta=\frac{\sqrt{2}}{2}$. Then $\cC_{\delta}=\mathbb{R} \times \mathbb{R}_+$ and 
$(x_0-\cC_{\delta})\cap A= \{(x,y)\in \mathbb{R}^2: -\frac{\pi}{2}\leq x \leq \frac{\pi}{2},0 \geq y\geq \sin(-| x |) \} $,
which is a compact (and therefore also a weak compact) set. 
Under these conditions, Theorem \ref{thm:Existence_He_Hartley_compact_nuevo} guarantees the existence of a Henig global proper efficient point of $A$ in $(x_0-\cC_{\delta})\cap A$. 

Finally, Equality (\ref{eq1}) follows from the fact that taking into account the slope of the curve $y=\sin(-| x |)$ at any $x \in [-\frac{\pi}{2}, \frac{\pi}{2}]\setminus \{0\} $, we can estimate
for every $x \in [-\frac{\pi}{2}, \frac{\pi}{2}]\setminus \{0\} $ a small enough $\delta_x>0$  such that 
$((x,\sin(-| x |)-\cC_{\delta_x})\cap A=\{ (x,\sin(-| x |)\}.$
%which prove (\ref{eq1}). 
At the origin $(0,0)$ the situation is different. We have that $((0,0)-\cC)\cap A  = \{ (0,0)\}$ and then $(0,0) \in \mbox{Min}(A)$, but we also have that $((0,0)-\cC_{\delta })\cap A \not = \{ (x,\sin(-| x |)\}$, for any  $\delta>0$. 
This happens because the slope of $y=\sin(-|x|)$ at $x=0$ is just $1$  on the left and $-1$ on the right, with the curve $y=\sin(-|x|)$ lying above  the half-lines defined by the slopes, which in turn define the boundary of $-\cC$.
Consequently, 
\begin{equation} \label{eq2}
 \text{Min}(A,\cC)=\{(x,y)\in \mathbb{R}^2: -\frac{\pi}{2}\leq x \leq \frac{\pi}{2},  y=\sin(-| x |) \}, 
\end{equation}  
since no point above the curve $y=\sin(-|x|)$ can be a minimal point.

\end{proof}

\begin{remark}
The previous theorem extends the sufficient condition given in \cite[Theorem~6]{Kasimbeyli2015} by removing the requirement of reflexivity for the space and relaxing the assumption of Hartley cone-weak compactness. In fact, our result only requires the weak compactness of a single section.
\end{remark}
Now we present the version of the previous theorem for Henig dilating cones. This result will be applied in the proof of the following theorem to make use of the convexity of the Henig dilating cones. The proof is omitted as it is an adaptation of the proof of the previous theorem.
\begin{theorem}\label{thm:Existence_GHe_con_Henig_dilating_cones_no_reflexivos}
Let $X$ be a partially ordered normed space, $\cC$ the ordering cone, $B$ a bounded base of $\cC$ such that $(\cC,\cC_{(B,\delta)})$ has the SSP for some $0<\delta<1$, and $A\subset X$.  If there exist $x_0\in A$ such that the section $(x_0-\cC_{(B,\delta)})\cap A$ is weak compact, then  $(x_0-\cC_{(B,\delta)})\cap A\cap \mbox{GHe}(A,\cC)\not = \emptyset$.
\end{theorem}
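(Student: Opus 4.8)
The plan is to mirror, essentially verbatim, the structure of the proof of Theorem~\ref{thm:Existence_He_Hartley_compact_nuevo}, but starting one step further along the chain since the Henig dilating cone $\cC_{(B,\delta)}$ is now given directly in the hypotheses rather than being constructed via Remark~\ref{rem:resumen_existencia_conos}. First I would record the standing data: $x_0\in A$ with $(x_0-\cC_{(B,\delta)})\cap A$ weak compact, and the pair $(\cC,\cC_{(B,\delta)})$ has the SSP. By Proposition~\ref{prop:separacion_implica_base_acotada} applied to the convex cone $\cC$ (or simply because $B$ is assumed bounded), $\cC$ has a bounded base, and by Lemma~\ref{lema:propiedades_entornos_conicos}(ii) the cone $\cC_{(B,\delta)}$ is closed and convex (shrinking $\delta$ below $\epsilon_B$ if necessary — but since the SSP is already assumed for this $\delta$, we may just invoke \cite[Lemma~3.4]{Gopfert2004}, which gives closedness and convexity for all $0<\delta<\delta_B$). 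Being closed and convex, it is weakly closed; hence $x_0-\cC_{(B,\delta)}$ is weakly closed.

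Next I would invoke Theorem~\ref{thm:tma_separacion_Henig_dilating_cone}: since $(\cC,\cC_{(B,\delta)})$ has the SSP, there exists $(f,\alpha)\in\cC^{a\#}_+$ with
\[
-\cC\setminus\{\cero\}\subset \mbox{int}(S(f,\alpha))=\{x\in X\colon f(x)+\alpha\normx<0\}\subset -\cC_{(B,\delta)}.
\]
Define $g_{f,\alpha}:X\to\R$ by $g_{f,\alpha}(x):=f(x)+\alpha\|x\|$; as $\|\cdot\|$ is weakly lower semicontinuous and $f$ is weakly continuous, $g_{f,\alpha}$ is weakly lower semicontinuous, so by \cite[Theorem~2.43]{ali-bor} it attains its minimum on the (nonempty) weak compact set $(x_0-\cC_{(B,\delta)})\cap A$ at some $x_1$. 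Then for every $x$ in this section with $x\neq x_1$ one has $0\le f(x-x_1)+\alpha(\|x\|-\|x_1\|)\le f(x-x_1)+\alpha\|x-x_1\|$, so writing $I(f,\alpha):=\mbox{int}(S(f,\alpha))\cup\{\cero\}$ we get $x-x_1\notin I(f,\alpha)$, i.e.
\begin{equation*}
(x_1+I(f,\alpha))\cap(x_0-\cC_{(B,\delta)})\cap A=\{x_1\}.
\end{equation*}
Since $-\cC\setminus\{\cero\}\subset\mbox{int}(S(f,\alpha))$ and $I(f,\alpha)$ is a convex cone, this yields $x_1\in\mbox{GHe}((x_0-\cC_{(B,\delta)})\cap A,\cC)$.

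Finally I would upgrade from the section to all of $A$. Because $x_1\in x_0-\cC_{(B,\delta)}$, because $I(f,\alpha)\subset S(f,\alpha)\subset-\cC_{(B,\delta)}$, and because $\cC_{(B,\delta)}$ is a convex cone (so $-\cC_{(B,\delta)}-\cC_{(B,\delta)}\subset-\cC_{(B,\delta)}$), we get $x_1+I(f,\alpha)\subset x_0-\cC_{(B,\delta)}$, hence
\[
(x_1+I(f,\alpha))\cap A\subset (x_1+I(f,\alpha))\cap(x_0-\cC_{(B,\delta)})\cap A=\{x_1\},
\]
which is exactly $x_1\in\mbox{GHe}(A,\cC)$, while $x_1\in(x_0-\cC_{(B,\delta)})\cap A$ by construction. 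The only genuinely delicate point is the convexity/closedness of $\cC_{(B,\delta)}$ — i.e.\ making sure the given $\delta$ lies in the range where \cite[Lemma~3.4]{Gopfert2004} applies (this is automatic from $0<\delta<1\le\delta_B$ after normalizing the base, as in the proof of Lemma~\ref{lema:dilatacion_dentro_entorno}); everything else is a routine transcription of the previous proof with $\cC_{(B,\delta)}$ in place of the pair $(\cC_\delta,\cC_{(B,\delta)})$, so no new obstacle arises.
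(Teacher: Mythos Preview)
Your proof is correct and is precisely the adaptation of the proof of Theorem~\ref{thm:Existence_He_Hartley_compact_nuevo} that the paper has in mind (the paper omits the proof, stating only that it is such an adaptation). The one small wrinkle you flag about $\delta<\delta_B$ is handled automatically: since $\cC_{(B,\delta)}$ is by hypothesis a Henig dilating cone, Definition~\ref{def:epsilon_conic_neighbourhood}(ii) already forces $\delta<\min\{1,\delta_B\}$, so closedness and convexity of $\cC_{(B,\delta)}$ follow directly from \cite[Lemma~3.4]{Gopfert2004} without any need to renormalize the (fixed) base $B$.
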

%\begin{proof}
%This is an adaptation of the proof of Theorem \ref{thm:Existence_He_Hartley_compact_nuevo} substituting the cones $\cC_{\alpha}$ by the cones $\cC_{B,\alpha}$. So, we will only give  a sketch of the proof.  Assume that the set $(x_0-\cC_{B,\epsilon})\cap A$ is weak compact. Since $\cC$ and $\cC_{B,\epsilon}$ have SSP,  Theorem~\ref{thm:primer_tma_separacion_C_y_K} applies and we choose $(f,\alpha)\in \cC^{a\#}_+$ to define  the function $g_{(f,\alpha)}:X\rightarrow \R$ as $g_{(f,\alpha)}(x):=f(x)+\alpha \| x \|$ for every $x\in X$. Then $g_{(f,\alpha)}$ attains a minimum at some $x_1 \in (x_0-\cC_{B,\epsilon})\cap A$. Now, following the argument of the proof of statement (i) in Theorem~\ref{thm:Existence_He_Hartley_compact_nuevo} we have  $x_1 \in \mbox{GHe}(A,\cC)$.   
%\end{proof}
%\begin{remark}\label{Rema:henig_global_point_esta_en_seccion_version_Henig_dilanting_cones}
%Under the assumptions and terminology  in statement of Theorem \ref{thm:Existence_GHe_con_Henig_dilating_cones_no_reflexivos}, we can choose a point in the section $(x_0-\cC_{B,\epsilon})\cap A$  that belongs to $\mbox{GHe}(A,\cC)$.
%\end{remark}

The following theorem is one of our main results. It assumes weak compactness of the considered set. This type of condition is common in existence results. Moreover, it is worth noting that our theorem does not require convexity conditions.
%The following theorem is one of our main results and it takes advantage of the convexity of the Henig dilating cones.% which establishes the existence of global Henig proper minimal points assuming the weak compactness of the set $A$ instead of the weak compactness of a section.% as in Theorem \ref{thm:Existence_He_Hartley_compact_nuevo}.  % (usual assumption in existence results, see for example \cite[Chapter 6]{Jahn2004}). 
\begin{theorem}\label{thm:Existence_He_Cjto_compactbis}
Let $X$ be a partially ordered normed space, $\cC$ the ordering cone, and $A\subset X$ a subset. Assume that $(\cC,\cC_{\alpha})$ has the SSP for every $0<\alpha<1$. If $A$ is  weak compact,  then $\mbox{GHe}(A,\cC)\not = \emptyset$.
\end{theorem}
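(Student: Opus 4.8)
The plan is to reduce the weakly compact case to the section case handled by Theorem \ref{thm:Existence_GHe_con_Henig_dilating_cones_no_reflexivos}. First I would fix any $\varepsilon$ with $0<\varepsilon<1$. Since $(\cC,\cC_{\varepsilon})$ has the SSP, Remark \ref{rem:resumen_existencia_conos} gives a bounded base $B$ of $\cC$ and a number $0<\varepsilon'<\varepsilon$ with $\varepsilon<\delta_B$, such that $\cC_{(B,\varepsilon)}$ is closed and convex, $\cC_{\alpha}\setminus\{\cero\}\subset\mathrm{int}(\cC_{(B,\varepsilon)})\subset\cC_{(B,\varepsilon)}\subset\cC_{\varepsilon}$ for $0<\alpha<\varepsilon'$, and — picking any $\beta$ with $0<\beta<\varepsilon'$, using that $(\cC,\cC_{\beta})$ has the SSP by hypothesis — the pair $(\cC,\cC_{(B,\varepsilon)})$ has the SSP as well. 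So $B$ is a bounded base of $\cC$ with $(\cC,\cC_{(B,\varepsilon)})$ enjoying the SSP, which is exactly the standing hypothesis of Theorem \ref{thm:Existence_GHe_con_Henig_dilating_cones_no_reflexivos}.

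Next I would produce a section of $A$ of the form $(x_0-\cC_{(B,\varepsilon)})\cap A$ that is weakly compact. Here I would simply pick an arbitrary $x_0\in A$ (assuming $A\neq\emptyset$; otherwise the statement is vacuous, though one should note the paper implicitly intends nonempty $A$). Since $\cC_{(B,\varepsilon)}$ is closed and convex, the set $x_0-\cC_{(B,\varepsilon)}$ is closed and convex, hence weakly closed. Therefore $(x_0-\cC_{(B,\varepsilon)})\cap A$ is the intersection of a weakly closed set with the weakly compact set $A$, and so it is weakly compact. In particular it is nonempty because it contains $x_0$ (as $\cero\in\cC_{(B,\varepsilon)}$).

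Finally, applying Theorem \ref{thm:Existence_GHe_con_Henig_dilating_cones_no_reflexivos} with this $x_0$, $B$, $\varepsilon$, we obtain $(x_0-\cC_{(B,\varepsilon)})\cap A\cap\mathrm{GHe}(A,\cC)\neq\emptyset$, and a fortiori $\mathrm{GHe}(A,\cC)\neq\emptyset$, which is the desired conclusion. I do not anticipate a serious obstacle: the content is entirely in Remark \ref{rem:resumen_existencia_conos} (which repackages the separation theorems, the bounded-base proposition, and the inclusion lemmas) and in the already-proved section-version theorem; the only mild subtlety is making sure the hypothesis "$(\cC,\cC_{\alpha})$ has the SSP for every $0<\alpha<1$" is invoked once at a small parameter $\beta<\varepsilon'$ to transfer the SSP to the Henig dilating cone $\cC_{(B,\varepsilon)}$ via part (iii) of the Remark, and then the weak-closedness of $x_0-\cC_{(B,\varepsilon)}$ — which is where convexity of the Henig dilating cone (as opposed to a generic $\varepsilon$-conic neighborhood) is essential.
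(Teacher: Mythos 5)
Your proposal is correct and follows essentially the same route as the paper: fix $\varepsilon$, use Remark \ref{rem:resumen_existencia_conos} to get a bounded base $B$ with $\cC_{(B,\varepsilon)}$ closed, convex and $(\cC,\cC_{(B,\varepsilon)})$ having the SSP, note that weak closedness of the Henig dilating cone makes the section $(x_0-\cC_{(B,\varepsilon)})\cap A$ weakly compact, and invoke Theorem \ref{thm:Existence_GHe_con_Henig_dilating_cones_no_reflexivos}. You actually spell out a step the paper leaves implicit, namely that part (iii) of the Remark must be applied with some $\beta<\varepsilon'$ using the full-strength hypothesis that $(\cC,\cC_{\alpha})$ has the SSP for every $\alpha$.
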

\begin{proof}
Fix $0<\epsilon<1$ and $x_0\in A$. By Remark \ref{rem:resumen_existencia_conos}, there exists a bounded base $B$ such that $\cC_{(B,\epsilon)}\subset \cC_{\epsilon}$ and $(\cC,\cC_{(B,\epsilon)})$ has the SSP. Since $\cC_{(B,\epsilon)}$ is weak closed, it follows that $A\cap (x_0-\cC_{(B,\epsilon)})$ is weak compact. Now, Theorem \ref{thm:Existence_GHe_con_Henig_dilating_cones_no_reflexivos} applies and the proof is over.
\end{proof}
We begin the final part of this section with two technical results that determine sufficient conditions for the sections given by the dilating cones we are dealing with to be included in subsets of balls as small as desired. These results are key for the density results and the Arrow, Barankin, Blackwell-type theorems that we will obtain at the end of this section. In the proof of the subsequent result we will denote by weak-$\lim_nx_n$ the limit of the sequence $(x_n)_n$ under the weak topology.

%We begin the final part of this section with two technical results that provide sufficient conditions for the sections defined by the dilating cones we are considering to be included in arbitrarily small subsets of balls. These results are crucial for the density results we will obtain at the end of this section.
%
%We devote the rest of this section to obtain some theorems of Arrow,  Barankin, Blackwell type for nonconvex sets and Henig global proper minimal points.  The subsequent  two propositions will be applied in the corresponding proofs.

\begin{proposition}\label{prop:arrow_barankin_blackwell_1}
Let $X$ be a partially ordered normed space, $\cC$ the ordering cone,  and $A\subset X$. Assume that $\cC$ is a closed cone having a bounded base %  the cones $\cC$ and $\cC_{\alpha}$ have SSP for every $0<\alpha<1$, 
and there exists $0<\delta<1$ such that the section $A \cap \left( -\cC_{\delta} \right) $ is weak compact.  If $\cero \in \mbox{Min}(A,\cC)$, then for every $0<\epsilon <1$ there exists  $n_{\epsilon} \in \mathbb{N}$ such that
$A \cap ( -\cC_{\frac{1}{n_{\epsilon}}}) \subset A \cap (\epsilon B_X)$.
\end{proposition}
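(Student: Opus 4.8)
The plan is to argue by contradiction. Suppose the conclusion fails for some fixed $0<\epsilon<1$. Then for every $n\in\mathbb{N}$ the set $A\cap(-\cC_{1/n})$ is not contained in $A\cap(\epsilon B_X)$, so there exists $x_n\in A\cap(-\cC_{1/n})$ with $\|x_n\|>\epsilon$. Since the $\epsilon$-conic neighborhoods are nested decreasing in the parameter (for $m\le n$ we have $\cC_{1/n}\subset\cC_{1/m}$, because $S_{1/n}\subset S_{1/m}$ by monotonicity of the distance condition, hence $\mathrm{cone}(S_{1/n})\subset\mathrm{cone}(S_{1/m})$), we have $x_n\in A\cap(-\cC_{1/n})\subset A\cap(-\cC_{\delta})$ once $1/n<\delta$, i.e. for all large $n$.

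Now I would use the weak compactness of $A\cap(-\cC_{\delta})$: passing to a subnet (or, since this is a metrizable-ball situation one can often extract a subsequence, but in a general normed space one should work with a subnet) we obtain $\bar{x}$ in the weak closure of $A\cap(-\cC_{\delta})$, which equals $A\cap(-\cC_{\delta})$ by weak compactness (in particular this set is weakly closed), so $\bar x\in A$. The next step is to show $\bar x\in -\cC$, i.e. $-\bar x\in\cC$. For each fixed $m$, all but finitely many of the $x_n$ lie in $-\cC_{1/m}$; since $\cC$ is a closed convex cone with bounded base, $\cC_{1/m}$ is closed (Lemma~\ref{lema:propiedades_entornos_conicos}(i)) and, taking $m$ small enough, also convex — actually I only need closedness here, but I also need weak closedness. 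Here is the key point: by Remark~\ref{rem:resumen_existencia_conos} / Lemma~\ref{lema:dilatacion_dentro_entorno}, for each small $m$ there is a Henig dilating cone $\cC_{(B,1/m)}$ with $\cC_{1/(2m)}\setminus\{\cero\}\subset\mathrm{int}(\cC_{(B,1/m)})\subset\cC_{(B,1/m)}\subset\cC_{1/m}$, and $\cC_{(B,1/m)}$ is closed and convex, hence weakly closed. So $\bar x\in -\cC_{(B,1/m)}$ for every small $m$. Finally, $\bigcap_{m}\cC_{(B,1/m)}=\cC$: the inclusion $\supset$ is clear, and for $\subset$, if $y\in\cC_{(B,1/m)}$ for all small $m$ then writing $y\ne\cero$ as $\lambda_m(b_m+\tfrac1m u_m)$ with $b_m\in B$, $u_m\in B_X$... this needs a short argument using that $B$ has $\delta_B>0$ and is bounded, forcing $d(y/\lambda,\cC)=0$ hence $y\in\overline\cC=\cC$. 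Thus $\bar x\in-\cC$.

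Now I combine: $\bar x\in A$, $\bar x\in -\cC$, and $\cero\in\mathrm{Min}(A,\cC)$ means $(A-\cero)\cap(-\cC)=\{\cero\}$, i.e. $A\cap(-\cC)=\{\cero\}$, forcing $\bar x=\cero$. But $\|x_n\|>\epsilon$ for all $n$, and the weak lower semicontinuity of the norm gives $\|\bar x\|\le\liminf\|x_n\|$... wait, that inequality goes the wrong way for a contradiction; weak lsc gives $\|\bar x\|\le\liminf\|x_n\|$ which does not contradict $\|x_n\|>\epsilon$. So instead I must extract the contradiction differently: I should \emph{not} conclude $\bar x=\cero$ from a limit of the norms, but rather observe directly that $\bar x=\cero$ is forced by $\bar x\in A\cap(-\cC)=\{\cero\}$, and then separately note that $\cero$ cannot be a weak cluster point of the $x_n$ in a way consistent with all the other structure. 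Actually the clean route is: $\bar x = \cero$, but then consider any $f\in\cC^{a\#}_+$ (which exists since $\cC$ has a bounded base, by Lemma~\ref{lema:C_base_acotada_equiv_cono_dual_aumentado_no_vacio}) with associated $(f,\alpha)$; since $x_n\in-\cC_{(B,1/m_0)}$ for a fixed small $m_0$ with $(\cC,\cC_{(B,1/m_0)})$ having the SSP — hmm, the SSP is not assumed here. Let me reconsider.

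\textbf{Cleaner endgame.} Since $\cC$ has a bounded base, pick $g\in S_{X^*}$ with $\inf_{\cC\cap S_X}g=\gamma>0$ (Theorem~1.1 of \cite{GARCIACASTANO20151178}, used as in Lemma~\ref{lema:propiedades_entornos_conicos}(ii)); as in that lemma's proof, for $m$ large enough $g(x)\ge\gamma/2$ whenever $d(x,\cC\cap S_X)\le 1/m$, hence $g\ge \gamma/2>0$ on $\cC_{1/m}\cap S_X$ and so $-g(x_n)\ge(\gamma/2)\|x_n\|>(\gamma/2)\epsilon>0$ for all large $n$ (using $x_n\in-\cC_{1/m}$, $\|x_n\|>\epsilon$). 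Passing to the weak limit of the subnet, weak continuity of $g$ gives $-g(\bar x)\ge(\gamma/2)\epsilon>0$, so $\bar x\ne\cero$ — contradicting $\bar x\in A\cap(-\cC)=\{\cero\}$. This is the step I expect to be the main obstacle: correctly organizing the subnet extraction together with the "for each $m$, eventually $x_n\in-\cC_{(B,1/m)}$" diagonal-type argument to land $\bar x\in-\cC$, and making sure the functional $g$ separates $\bar x$ from $\cero$ quantitatively rather than relying on the (wrong-direction) weak lower semicontinuity of the norm. The identification $\bigcap_m\cC_{(B,1/m)}=\cC$ is the one genuinely technical lemma needed and should be isolated and proved using boundedness of $B$ and $\delta_B>0$. $\qed$
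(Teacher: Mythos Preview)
Your strategy matches the paper's exactly: argue by contradiction, interpolate Henig dilating cones between successive $\epsilon$-conic neighbourhoods to obtain convex (hence weakly closed) cones, extract a weak cluster point $\bar x$ inside all of them via the weak compactness of $A\cap(-\cC_\delta)$, and conclude $\bar x\in A\cap(-\cC)\setminus\{\cero\}$, contradicting $\cero\in\mbox{Min}(A,\cC)$.

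Two points of comparison are worth recording. First, your functional endgame is in fact more careful than the paper's: the paper simply asserts that $\|x_n\|>\epsilon$ for all $n$ ``yields $x_0\ne\cero$'', a step which---as you correctly flag---does not follow from weak lower semicontinuity of the norm; your use of $g\in S_{X^*}$ with $\inf_{\cC\cap S_X}g=\gamma>0$ to obtain a uniform bound $-g(x_n)\ge c>0$ that passes to the weak limit is the right way to secure this. Second, for the intersection step the paper does not isolate your proposed lemma $\bigcap_m\cC_{(B,1/m)}=\cC$; instead it writes $x_0=-\lambda_j z_j$ with $d(z_j,\cC\cap S_X)\le 1/n_j$, bounds the $\lambda_j$ via $\|z_j\|\in[M_1,M_2]$, and approximates each $z_j$ by an element of $\cC\cap S_X$ to show $x_0\in-\overline\cC=-\cC$ directly. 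Either route works, but note that the bounded base produced by Lemma~\ref{lema:dilatacion_dentro_entorno} depends on the parameter, so in your version you should write $B_m$ rather than a single fixed $B$, and use the sandwiching $\cC_{1/n_{m+1}}\subset\cC_{(B_m,1/n_m)}\subset\cC_{1/n_m}$ to identify the intersection of the Henig cones with that of the $\cC_{1/n_m}$.
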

\begin{proof}
Fix $0<\delta<1$ from the statement and assume, contrary to our claim, that there exists $0<\epsilon<1$ such that for every $n \in \mathbb{N}$ we can pick $x_n \in A \cap \left( -\cC_{\frac{1}{n}} \right) $ satisfying  $\| x_n \| > \epsilon$.  We will check that under the former assumption we could  find a cluster point $x_0\not =\cero$  of the former sequence $(x_n)_{n\in \mathbb{N}}$ under the weak topology such that $x_0\in A \cap ( -\cC)$,  then contradicting the assumption $\cero \in \mbox{Min}(A,\cC)$. For this purpose, we will find a strictly increasing sequence $(n_k)_{k\geq 1}\subset \mathbb{N}$ and a sequence $(B_k)_{k\geq 1}$ of bounded bases of $\cC$ such that $\cC \subset  \cC_{\frac{1}{n_{k+1}}} \subset \cC_{(B_{k},\frac{1}{n_k})}\subset \cC_{\frac{1}{n_k}} \subset \cC_{\delta}$ for every $k\geq 1$.  We now proceed by induction.  Fix some $n_1 \in \mathbb{N}$ such that  $0<\frac{1}{n_1}<\delta$ and take  $\cC_{\frac{1}{n_1}} \subset \cC_{\delta}$.   We now apply Lemma \ref{lema:dilatacion_dentro_entorno}  to $\cC$ and $\epsilon=\frac{1}{n_1}$, then there exist a bounded base $B_1$ of $\cC$ and  $0<\epsilon'<\epsilon=\frac{1}{n_1}$ such that $\delta_{B_1}>\frac{1}{n_1}$, $\cC_{(B_{1},\frac{1}{n_1})}\subset \cC_{\frac{1}{n_1}}$, and $\cC_{\alpha}\setminus \{0_X\} \subset \mbox{int}(\cC_{(B_{1},\frac{1}{n_1})})$ for every $0<\alpha<\epsilon'$. Then, we pick $n_2\in \mathbb{N}$ such that $\frac{1}{n_2}<\epsilon'$.  As a consequence, $n_2 > n_1$ and $\cC_{\frac{1}{n_2}} \subset\cC_{(B_1,\frac{1}{n_1})} \subset \cC_{\frac{1}{n_1}}\subset \cC_{\delta}$. 
%Again, by Remark \ref{rem:resumen_existencia_conos}, but now starting from the cone $\cC_{\frac{1}{n_1}}$, there exist a bounded base $B_2$ of $\cC$, $0<\epsilon_{2}< \min\{\epsilon_{1},\frac{1}{n_1}\} $,  and $n_2 > n_1$  such that  $\cC_{\frac{1}{n_2}} \subset \cC_{B_2,\epsilon_{2}} \subset \cC_{\frac{1}{n_1}}$.  Then, $\cC\subset \cC_{\frac{1}{n_2}} \subset \cC_{B_2,\epsilon_{2}} \subset \cC_{\frac{1}{n_1}} \subset\cC_{B_1,\epsilon_{1}} \subset \cC_{\delta}$. 
Now assume that for some $k_0>1$ we have $\cC \subset  \cC_{\frac{1}{n_{k_0+1}}} \subset \cC_{(B_{k_0},\frac{1}{n_{k_0}})}\subset \cC_{\frac{1}{n_{k_0}}} \subset \cC_{\delta}$.  By Lemma \ref{lema:dilatacion_dentro_entorno} applied to $\cC$ and $\epsilon=\frac{1}{n_{k_0+1}}$,  there exist a bounded base $B_{k_0+1}$ of $\cC$ and  $n_{k_0+2} > n_{k_0+1}$ such that  $\cC \subset\cC_{\frac{1}{n_{k_0+2}}} \subset\cC_{(B_{k_0+1},\frac{1}{n_{k_0+1}})}\subset \cC_{\frac{1}{n_{k_0+1}}}\subset \cC_{\delta}$ and the induction finishes. 
%Again, by Remark \ref{rem:resumen_existencia_conos}, but now there exist a bounded base $B_2$ of $\cC$, $0<\epsilon_{2}< \min\{\epsilon_{1},\frac{1}{n_1}\} $,  and $n_2 > n_1$  such that  $\cC_{\frac{1}{n_2}} \subset \cC_{B_2,\epsilon_{2}} \subset \cC_{\frac{1}{n_1}}$. 
%
%
%
% Then, $\cC\subset \cC_{\frac{1}{n_2}} \subset \cC_{B_2,\epsilon_{2}} \subset \cC_{\frac{1}{n_1}} \subset\cC_{B_1,\epsilon_{1}} \subset \cC_{\delta}$. 
%
%
%Repeating this process indefinitely we get the following sequence of cones
%\begin{equation*}
%\cC_{\delta} \supset \cC_{\frac{1}{n_0}} \supset \cC_{B_1,\epsilon_{1}} \supset \cC_{\frac{1}{n_1}}  \supset \cC_{B_2,\epsilon_{2}}  \supset \ldots \supset \cC_{B_{k},\epsilon_{k}} \supset   \cC_{\frac{1}{n_k}} \supset \cC_{B_{k+1},\epsilon_{k+1}}\supset \cC_{\frac{1}{n_{k+1}}} \ldots \supset \cC
%\end{equation*}
%where the sequence $\{n_k\}_{k}\rightarrow + \infty$ is is strictly increasing, $\{\epsilon_k\}_{k} \rightarrow 0+$  is strictly decreasing, 
Now,   note that $x_{n_k} \in  A\cap (-\cC_{\frac{1}{n_k}})$ for every $k\geq~1$,  then $(x_{n_k})_{k\geq j+1} \subset A\cap (-\cC_{(B_j,\frac{1}{n_j})}) \subset A\cap (-\cC_{\delta} )$ for every $j\geq 1$. On the other hand,  each cone  $\cC_{(B_j,\frac{1}{n_j})}$ is closed and  convex, and then weakly closed for every $j\geq 1$.  Then the set $A\cap (-\cC_{(B_j,\frac{1}{n_j})})$ is  weakly compact for every $j\geq 1$ because $A\cap (-\cC_{(B_j,\frac{1}{n_j})})\subset A \cap (-\cC_{\delta})$ and the latter  is a weakly compact set. As a consequence,  the sequence $(x_{n_k})_{k\geq 2} \subset A \cap (-\cC_{(B_1,\frac{1}{n_1})})$ has a subsequence, namely $(x_{n_{k_\ell}})_{\ell\geq 1}$, that converges to some $x_0\in A \cap (-\cC_{(B_1,\frac{1}{n_1})})$ under the weak topology.  The condition  $\| x_n \| > \epsilon$ for every $n\in \mathbb{N}$ yields $x_0\not =\cero$. But  $x_0\in A\cap (-\cC_{(B_j,\frac{1}{n_j})})$ for every $j\geq 1$, because $(x_{n_k})_{k\geq j+1} \subset A\cap (-\cC_{(B_j,\frac{1}{n_j})})$.  Indeed,  the sequence $(x_{n_k})_{k\geq 2}\subset A\cap (-\cC_{(B_1,\frac{1}{n_1})})$ has the subsequence $(x_{n_{k_\ell}})_{\ell\geq 1}\subset A\cap (-\cC_{(B_1,\frac{1}{n_1})})$ that satisfies weak-$\lim_{\ell}x_{n_{k_\ell}}=x_0 \in A\cap (-\cC_{(B_1,\frac{1}{n_1})})$.  Now, since  $(x_{n_{k_\ell}})_{\ell\geq 2}$ is a subsequence of $(x_{n_k})_{k\geq 3}\subset A\cap (-\cC_{(B_2,\frac{1}{n_2})})$, it follows that $(x_{n_{k_\ell}})_{\ell\geq 2}\subset A\cap (-\cC_{(B_2,\frac{1}{n_2})})$ and weak-$\lim_{\ell}x_{n_{k_\ell}}=x_0 \in A\cap (-\cC_{(B_2,\frac{1}{n_2})})$ because the set $A\cap (-\cC_{(B_2,\frac{1}{n_2})})$ is weakly closed. In this way, for every $m>  1$ we have that $(x_{n_{k_\ell}})_{\ell\geq m}$ is a subsequence of $(x_{n_k})_{k\geq m+1}\subset A\cap (-\cC_{(B_m,\frac{1}{n_m})})$, it follows that $(x_{n_{k_\ell}})_{\ell\geq m}\subset A\cap (-\cC_{(B_m,\frac{1}{n_m})})$ and weak-$\lim_{\ell}x_{n_{k_\ell}}=x_0 \in A\cap (-\cC_{(B_m,\frac{1}{n_m})})$ because the set $A\cap (-\cC_{(B_m,\frac{1}{n_m})})$ is weakly closed. Then $x_0\in \cap_{j=1}^{+\infty}(-\cC_{(B_j,\frac{1}{n_j})})=\cap_{j=2}^{+\infty}(-\cC_{\frac{1}{n_j}})$. Now, we will check that $x_0\in -\overline{\cC}$ and the closedness of $\cC$ will imply that $x_0\in  A\cap (-\cC)$, which is impossible. To prove $x_0\in -\overline{\cC}$, we will pick an arbitrary $\eta>0$ and we will check that $B(x_0;\eta)\cap ( -\cC)\not =\emptyset$.   Since $x_0\in \cap_{j\geq 2}(-\cC_{\frac{1}{n_j}})$ we can write,  for every $j \geq 2$, $x_0=-\lambda_jz_j$ for some $\lambda_j>0$ and $z_j\in X$ such that $d(z_j,\cC\cap S_X)\leq \frac{1}{n_j}$. Since $\cC\cap S_X$ is bounded and $\cero \not \in \overline{\cC\cap S_X}$, there exist $0<M_1<M_2$ such that $M_1\leq \| z_j\|\leq M_2$ for every $j \geq 1$. Indeed, fix an arbitrary $j_0\geq 1$ and let $x_{j_0}\in \cC\cap S_X$ such that $\|z_{j_0}-x_{j_0}\|< \frac{3}{2n_{j_0}}$. Then $\|z_{j_0}\|\leq \|z_{j_0}-x_{j_0}\|+\|x_{j_0}\|<\frac{3}{2n_{j_0}}+1\leq \frac{3}{2}+1=\frac{5}{2}=:M_2$.  To find $M_1$ we restrict our argument to $j_0\geq 2$. In this case we have that $n_{j_0}\geq 2$ and we have $\|z_{j_0}\|\geq \|x_{j_0}\|-\|z_{j_0}-x_{j_0}\|=1-\|z_{j_0}-x_{j_0}\|>1-\frac{3}{2n_{j_0}}\geq 1-\frac{3}{4}=\frac{1}{4}=:M_1^*$.  Then, we take $M_1:=\min\{M_1^*,\|z_1\|\}$. Therefore, $\lambda_j=\frac{\| x_0\|}{\| z_j\|}\leq \frac{\| x_0\|}{M_1}$, i.e., the sequence $\{\lambda_j\}_j\subset [0,+\infty)$ is bounded.  It is not restrictive to assume that such a sequence is, in fact, convergent to some $\lambda_0 \in [0,+\infty)$. It is clear that $\lambda_0>0$ because $\frac{\| x_0\|}{\lambda_j}= \| z_j\|\leq M_2$ for every $j \geq 1$. Indeed,  $\lambda_0=0$ would imply $\lim_j \| z_j\|=+\infty$. Fix $i \geq 1$ such that $\frac{2\lambda_{i}}{n_i}<\eta$. Since $d(z_{i},\cC\cap S_X)\leq \frac{1}{n_i}$, there exists $c \in \cC$ such that $\| z_{i}-\frac{c}{\| c \|}\| \leq \frac{2}{n_i}$. Define $u:=-\frac{c}{\|c\|}\lambda_i\in -\cC$, then $u \in B(x_0;\eta)\cap (-\cC)$ because $\|x_0-u\|=\lambda_i\|-z_i+\frac{c}{\|c\|}\|<\frac{2\lambda_{i}}{n_i}<\eta$ and the proof is over.
\end{proof}

\begin{proposition}\label{prop:arrow_barankin_blackwell_Henig_dilating_cones}
Let $X$ be a partially ordered normed space, $\cC$ the ordering cone,  and $A\subset X$ a weak compact subset. Assume that $\cC$ is a closed cone satisfying that $(\cC,\cC_{\alpha})$ has the SSP for every $0<\alpha<1$. If $\cero \in \mbox{Min}(A,\cC)$, then for every $0<\epsilon <1$ there exist  $n_{\epsilon} \in \mathbb{N}$ and a bounded base $B(\epsilon)$ of $\cC$ such that $\frac{1}{n_{\epsilon}}<\delta_{B(\epsilon)}$, $(\cC,\cC_{(B(\epsilon),\frac{1}{n_{\epsilon}})})$ has the SSP,  and $A \cap ( -\cC_{(B(\epsilon),\frac{1}{n_{\epsilon}})}) \subset A \cap (\epsilon B_X)$.
\end{proposition}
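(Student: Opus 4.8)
The statement is the Henig-dilating-cone analogue of Proposition~\ref{prop:arrow_barankin_blackwell_1}, so the strategy is to reduce to that proposition and then upgrade an $\epsilon$-conic neighborhood to a Henig dilating cone sitting inside it, using Remark~\ref{rem:resumen_existencia_conos}. First I would note that since $A$ is weak compact and, for each fixed $0<\delta<1$, the cone $\cC_{\delta}$ is closed (Lemma~\ref{lema:propiedades_entornos_conicos}(i)), the section $A\cap(-\cC_{\delta})$ is weak compact (it is the intersection of a weak compact set with a closed — hence weakly closed once we pass through a convex Henig dilating cone, or directly since $\cC_\delta$ is norm-closed and we can use Fact~\ref{fact_subconjunto_compacto} after interpolating a convex cone). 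Actually the cleanest route: fix any $0<\delta<1$; by Remark~\ref{rem:resumen_existencia_conos} applied to $(\cC,\cC_\delta)$ there is a bounded base with $\cC_{(B,\delta)}\subset\cC_\delta$ weakly closed, so $A\cap(-\cC_{(B,\delta)})$ is weak compact, and then $A\cap(-\cC_\delta)$ is weak compact by an elementary argument, or one simply observes $\cC$ has a bounded base (Proposition~\ref{prop:separacion_implica_base_acotada}) and all hypotheses of Proposition~\ref{prop:arrow_barankin_blackwell_1} hold. In any case, Proposition~\ref{prop:arrow_barankin_blackwell_1} gives, for the given $0<\epsilon<1$, some $m\in\mathbb{N}$ with $A\cap(-\cC_{1/m})\subset A\cap(\epsilon B_X)$.

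Second, I would use Remark~\ref{rem:resumen_existencia_conos} with the parameter $1/m$ in place of $\epsilon$: since $(\cC,\cC_{1/m})$ has the SSP, there exist a bounded base $B(\epsilon):=B$ of $\cC$ and $0<\epsilon'<1/m$ such that $1/m<\delta_{B(\epsilon)}$, the Henig dilating cone $\cC_{(B(\epsilon),1/m)}$ is closed and convex, and $\cC_{(B(\epsilon),1/m)}\subset\cC_{1/m}$ (part (i) of that remark). It remains only to arrange the SSP for the pair $(\cC,\cC_{(B(\epsilon),1/m)})$: by part (iii) of the remark, this holds provided $(\cC,\cC_\beta)$ has the SSP for some $0<\beta<\epsilon'$ — which is granted by our blanket hypothesis that $(\cC,\cC_\alpha)$ has the SSP for \emph{every} $0<\alpha<1$. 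Setting $n_\epsilon:=m$ and $B(\epsilon):=B$, we get $1/n_\epsilon<\delta_{B(\epsilon)}$, $(\cC,\cC_{(B(\epsilon),1/n_\epsilon)})$ has the SSP, and the inclusion chain $A\cap(-\cC_{(B(\epsilon),1/n_\epsilon)})\subset A\cap(-\cC_{1/n_\epsilon})\subset A\cap(\epsilon B_X)$ finishes the proof.

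**Main obstacle.** There is no deep obstacle here — the substance is entirely contained in Proposition~\ref{prop:arrow_barankin_blackwell_1} and Remark~\ref{rem:resumen_existencia_conos}; this proposition is essentially a packaging result that swaps the $\epsilon$-conic neighborhood $\cC_{1/m}$ for a convex Henig dilating cone trapped between $\cC$ and $\cC_{1/m}$. The one point requiring a little care is the logical ordering: one must first extract $m$ from Proposition~\ref{prop:arrow_barankin_blackwell_1}, and only \emph{then} apply Remark~\ref{rem:resumen_existencia_conos} at scale $1/m$ (not at scale $\epsilon$), so that the resulting Henig dilating cone is contained in $\cC_{1/m}$ and therefore inherits the small-ball containment. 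A second minor point is checking the hypotheses of Proposition~\ref{prop:arrow_barankin_blackwell_1} are met: $\cC$ closed is assumed; $\cC$ has a bounded base follows from the SSP via Proposition~\ref{prop:separacion_implica_base_acotada}; and weak compactness of the section $A\cap(-\cC_\delta)$ for some $0<\delta<1$ follows because $A$ is weak compact and $\cC_\delta$ is norm-closed — to be fully rigorous one interpolates a convex (hence weakly closed) Henig dilating cone $\cC_{(B,\delta)}\subset\cC_\delta$ via Remark~\ref{rem:resumen_existencia_conos}, notes $A\cap(-\cC_{(B,\delta)})$ is weak compact, and since this covers the portion of $A\cap(-\cC_\delta)$ we will actually use (indeed $\cC_{1/m}\subset\cC_{(B,\delta)}$ for $m$ large by the same remark), Proposition~\ref{prop:arrow_barankin_blackwell_1}'s conclusion can be invoked with $\delta$ replaced by a value for which the section is genuinely weak compact.
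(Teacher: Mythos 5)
Your second step (once $n_\epsilon$ is in hand, apply Remark~\ref{rem:resumen_existencia_conos} at scale $1/n_\epsilon$ to trap a Henig dilating cone $\cC_{(B(\epsilon),1/n_\epsilon)}$ inside $\cC_{1/n_\epsilon}$ and inherit both the SSP and the small-ball containment) is exactly what the paper does and is correct. The gap is in the first step: you cannot invoke Proposition~\ref{prop:arrow_barankin_blackwell_1} as a black box, because its hypothesis --- that some section $A\cap(-\cC_\delta)$ by an $\epsilon$-conic neighbourhood is \emph{weakly} compact --- does not follow from the weak compactness of $A$. The cone $\cC_\delta$ is only norm-closed (Lemma~\ref{lema:propiedades_entornos_conicos}(i)) and is in general neither convex nor weakly closed, so $A\cap(-\cC_\delta)$ is a norm-closed, bounded subset of a weakly compact set, which need not be weakly compact. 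Your proposed patches do not repair this: Fact~\ref{fact_subconjunto_compacto} needs the smaller set to be closed in the topology in which the larger intersection is compact, so it transfers weak compactness \emph{down} to $A\cap(-\cC_{(B,\delta)})\subset A\cap(-\cC_\delta)$ but never \emph{up} to $A\cap(-\cC_\delta)$ itself; and there is simply no value of $\delta$ for which the $\epsilon$-conic-neighbourhood section is ``genuinely weak compact'' in general.

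What the paper does instead --- and what your remark ``this covers the portion we will actually use'' is groping toward --- is to \emph{rerun} the contradiction argument of Proposition~\ref{prop:arrow_barankin_blackwell_1} rather than cite it: one builds the same nested chain $\cC\subset\cC_{1/n_{k+1}}\subset\cC_{(B_k,1/n_k)}\subset\cC_{1/n_k}$ (using Remark~\ref{rem:resumen_existencia_conos} in place of Lemma~\ref{lema:dilatacion_dentro_entorno}), and then each $A\cap(-\cC_{(B_k,1/n_k)})$ is weakly compact \emph{directly} because it is a weakly closed (closed convex) subset of the weakly compact set $A$ --- no $\epsilon$-conic-neighbourhood section ever needs to be weakly compact. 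With that substitution the extraction of a nonzero weak cluster point $x_0\in\bigcap_k(-\cC_{(B_k,1/n_k)})\subset-\overline{\cC}=-\cC$ contradicting $\cero\in\mbox{Min}(A,\cC)$ goes through verbatim, and your step two then finishes the proof. So the architecture of your argument is right, but the reduction must be replaced by a repetition of the earlier proof with the weak compactness sourced from $A$ itself.
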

\begin{proof}
Fix arbitrary $0<\epsilon<1$. We will show that there exists  $n_{\epsilon} \in \mathbb{N}$ such that $A \cap( -\cC_{\frac{1}{n_{\epsilon}}}) \subset A \cap (\epsilon B_X)$.  Let us assume the contrary, i.e.,  assume that there exists $0<\epsilon<1$ such that for every $n \in \mathbb{N}$ we can pick $x_n \in A \cap (-\cC_{\frac{1}{n}}) $ satisfying  $\| x_n \| > \epsilon$.  Repeating the argument used in the proof of Proposition \ref{prop:arrow_barankin_blackwell_1} (applying Rermark \ref{rem:resumen_existencia_conos} instead of Lemma \ref{lema:dilatacion_dentro_entorno}) we have a strictly increasing sequence $(n_k)_{k\geq 1}\subset \mathbb{N}$ and a sequence $(B_k)_{k\geq 1}$ of bounded bases of $\cC$ such that the pair $(\cC,\cC_{(B_{k},\frac{1}{n_k})})$ has the SSP and $\cC \subset  \cC_{\frac{1}{n_{k+1}}} \subset \cC_{(B_{k},\frac{1}{n_k})}\subset \cC_{\frac{1}{n_k}} \subset \cC_{\delta}$ for every $k\geq 1$.   Each cone  $\cC_{(B_j,\frac{1}{n_j})}$ is closed and  convex, and then weakly closed for every $j\geq 1$.  Then the set $A\cap (-\cC_{(B_j,\frac{1}{n_j})})$ is  weakly compact for every $j\geq 1$ because $A\cap (-\cC_{(B_j,\frac{1}{n_j})})\subset A$ and $A$ is a weak compact set.  As a consequence,  for every $j\geq 1$ the sequence $(x_{n_k})_{k\geq j+1} \subset A\cap (-\cC_{(B_j,\frac{1}{n_j})})$  has a subsequence that converges to some $x_0\in A \cap (-\cC_{(B_j,\frac{1}{n_j})})$, $x_0\not = \cero$, under the weak topology.   Then  $x_0\in \cap_{j=1}^{+\infty}(-\cC_{(B_j,\frac{1}{n_j})})=\cap_{j=2}^{+\infty}(-\cC_{\frac{1}{n_j}})$, as in the final part of the proof of Proposition \ref{prop:arrow_barankin_blackwell_1}. It follows that $x_0\in -\overline{\cC}$  and hence $x_0\in  A\cap (-\cC)$ which is impossible.  Now we consider $n_{\epsilon} \in \mathbb{N}$ such that $A \cap \left( -\cC_{\frac{1}{n_{\epsilon}}}\right) \subset A \cap (\epsilon B_X) $. 
By Remark \ref{rem:resumen_existencia_conos}, there exists a bounded base $B(\epsilon)$ of $\cC$ such that $(\cC,\cC_{(B(\epsilon),\frac{1}{n_{\epsilon}})})$ has the SSP and $\cC_{(B(\epsilon),\frac{1}{n_{\epsilon}})} \subset \cC_{\frac{1}{n_{\epsilon}}}$.  Then we have $A \cap ( -\cC_{(B(\epsilon),\frac{1}{n_{\epsilon}})}) \subset A \cap (\epsilon B_X)$ and the proof is over.
\end{proof}

Our first density result is a local approximation theorem. In the proof of this theorem, the convexity of the Henig dilating cones plays a crucial role.
\begin{theorem}\label{thm:arrow_barankin_blackwell_local_approximation}
Let $X$ be a partially ordered normed space, $\cC$ the ordering cone,  $A\subset X$, and $\bar{x}\in A$.  Assume that $\cC$ is a closed cone satisfying that $(\cC,\cC_\alpha)$ has the SSP for every $0<\alpha<1$ and there exists $0<\delta<1$ such that the section $A \cap \left( \bar{x}-\cC_{\delta} \right) $ is weak compact. If $\bar{x} \in \mbox{Min}(A,\cC)$, then for every $\epsilon>0$ there exists $x_{\epsilon}\in \mbox{GHe}(A,\cC)$ such that $\| \bar{x}-x_{\epsilon}\|<\epsilon$.
\end{theorem}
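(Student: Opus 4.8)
The idea is to reduce to a translated copy of the set at $\bar x$ and then iterate the existence theorem on smaller and smaller dilating-cone sections, using Proposition~\ref{prop:arrow_barankin_blackwell_Henig_dilating_cones} to control the size of those sections. First I would translate: put $A':=A-\bar x$, so $\cero\in A'$ and, since $\mbox{Min}(\cdot,\cC)$ and $\mbox{GHe}(\cdot,\cC)$ are both translation invariant (they are defined through $A'-x_0$), we have $\cero\in\mbox{Min}(A',\cC)$, and it suffices to find, for each $\epsilon>0$, a point $x'_\epsilon\in\mbox{GHe}(A',\cC)$ with $\|x'_\epsilon\|<\epsilon$; then $x_\epsilon:=x'_\epsilon+\bar x$ works. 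Note $A'\cap(-\cC_\delta)=(A\cap(\bar x-\cC_\delta))-\bar x$ is weak compact, so $A'$ satisfies all hypotheses of Proposition~\ref{prop:arrow_barankin_blackwell_1} and Proposition~\ref{prop:arrow_barankin_blackwell_Henig_dilating_cones} (for the latter I would first have to observe that $A'\cap(-\cC_\delta)$ weak compact is enough, rather than $A'$ itself weak compact — the statement of Proposition~\ref{prop:arrow_barankin_blackwell_Henig_dilating_cones} is phrased for $A$ weak compact, but its proof only ever uses that the sets $A\cap(-\cC_{(B_j,1/n_j)})$ are contained in a fixed weakly compact section; so either I invoke it with the section playing the role of $A$, or I re-run its short argument verbatim with $A\cap(-\cC_\delta)$ in place of $A$).

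Next, fix $\epsilon>0$; without loss of generality $\epsilon<1$. By Proposition~\ref{prop:arrow_barankin_blackwell_Henig_dilating_cones} (applied to $A'$ as just discussed) there are $n_\epsilon\in\mathbb N$ and a bounded base $B(\epsilon)$ of $\cC$ with $\frac{1}{n_\epsilon}<\delta_{B(\epsilon)}$, such that $(\cC,\cC_{(B(\epsilon),1/n_\epsilon)})$ has the SSP and
\[
A'\cap\bigl(-\cC_{(B(\epsilon),1/n_\epsilon)}\bigr)\subset A'\cap(\epsilon B_X).
\]
Now I want to apply Theorem~\ref{thm:Existence_GHe_con_Henig_dilating_cones_no_reflexivos} to $A'$ with ordering cone $\cC$, base $B(\epsilon)$, parameter $\delta=1/n_\epsilon$, and $x_0=\cero\in A'$. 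Its hypothesis is that the section $(x_0-\cC_{(B(\epsilon),1/n_\epsilon)})\cap A'=A'\cap(-\cC_{(B(\epsilon),1/n_\epsilon)})$ is weak compact; this holds because $\cC_{(B(\epsilon),1/n_\epsilon)}$ is closed and convex (Lemma~\ref{lema:propiedades_entornos_conicos}) hence weakly closed, and $A'\cap(-\cC_{(B(\epsilon),1/n_\epsilon)})\subset A'\cap(-\cC_\delta)$ which is weak compact, so Fact~\ref{fact_subconjunto_compacto} applies. Theorem~\ref{thm:Existence_GHe_con_Henig_dilating_cones_no_reflexivos} then yields a point
\[
x'_\epsilon\in\bigl(-\cC_{(B(\epsilon),1/n_\epsilon)}\bigr)\cap A'\cap\mbox{GHe}(A',\cC)\neq\emptyset.
\]

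Finally, because $x'_\epsilon\in A'\cap(-\cC_{(B(\epsilon),1/n_\epsilon)})\subset A'\cap(\epsilon B_X)$, we get $\|x'_\epsilon\|\le\epsilon$; shrinking $\epsilon$ slightly at the outset (work with $\epsilon/2$) gives the strict inequality $\|x'_\epsilon\|<\epsilon$ demanded in the statement. Translating back, $x_\epsilon:=x'_\epsilon+\bar x\in\mbox{GHe}(A,\cC)$ and $\|\bar x-x_\epsilon\|<\epsilon$, which completes the proof. The only genuinely delicate point I expect is the bookkeeping around Proposition~\ref{prop:arrow_barankin_blackwell_Henig_dilating_cones}: its published statement assumes the \emph{whole} set is weak compact, whereas here only a section is, so I must justify carefully that its proof goes through under the weaker local hypothesis (it does, since every compactness use there is confined to a fixed section $A\cap(-\cC_\delta)$). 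Everything else is an assembly of Remark~\ref{rem:resumen_existencia_conos}, Fact~\ref{fact_subconjunto_compacto}, weak closedness of closed convex cones, and translation invariance.
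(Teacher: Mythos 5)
Your proof is correct and follows essentially the same route as the paper's: reduce to $\bar{x}=\cero$ by translation, shrink the section into $\epsilon B_X$, and apply Theorem~\ref{thm:Existence_GHe_con_Henig_dilating_cones_no_reflexivos} to the weakly compact Henig-dilating-cone section. The only (cosmetic) difference is that the paper avoids your ``delicate point'' entirely by invoking Proposition~\ref{prop:arrow_barankin_blackwell_1} (which is stated precisely under the sectional weak-compactness hypothesis, needing only a bounded base, supplied by Proposition~\ref{prop:separacion_implica_base_acotada}) together with Remark~\ref{rem:resumen_existencia_conos} to produce the SSP Henig dilating cone inside $\cC_{1/n_\epsilon}\cap\cC_\delta$, rather than re-running Proposition~\ref{prop:arrow_barankin_blackwell_Henig_dilating_cones} under a weakened hypothesis; your justification for that re-run is nonetheless sound.
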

 
\begin{proof}
Fix $\epsilon>0$. We first consider the case $\bar{x}  =\cero$.  As ordering cones are assumed to be convex,  by Proposition \ref{prop:separacion_implica_base_acotada}, it follows that $\cC$ has a bounded base. Then, we apply Proposition~\ref{prop:arrow_barankin_blackwell_1}  and there exists  $n_{\epsilon} \in \mathbb{N}$ such that $A \cap (-\cC_{\frac{1}{n_{\epsilon}}}) \subset A \cap (\epsilon B_X) $.
Now, take  $m> \mbox{max}\{ n_{\epsilon}, \frac{1}{\delta} \}$ and apply Remark \ref{rem:resumen_existencia_conos} to $\cC$ and $\epsilon=\frac{1}{m}$. Then, there exists $B$ a bounded base of $\cC$ such that $\cC_{(B,\frac{1}{m})}\subset \cC_{\frac{1}{m}}$ and the pair of cones $(\cC,\cC_{(B,\frac{1}{m})})$ has the SSP.  Since $\cC_{(B,\frac{1}{m})}$ is weak closed and $A\cap (-\cC_{(B,\frac{1}{m})})\subset A\cap (-\cC_{\delta})$, it follows that $A\cap (-\cC_{(B,\frac{1}{m})})$ is weak compact.  Now, Theorem \ref{thm:Existence_GHe_con_Henig_dilating_cones_no_reflexivos}  applies and there exists $x_{\epsilon}\in A\cap (-\cC_{(B,\frac{1}{m})})$ such that $x_{\epsilon}\in \mbox{GHe}(A,\cC)$. Finally, the inclusion $A\cap (-\cC_{(B,\frac{1}{m})})\subset  A \cap (\epsilon B_X)$ implies $\|x_{\epsilon}\|<\epsilon$. Now consider the case $\bar{x}\not =\cero$. Then $\cero\in\mbox{Min}(A-\bar{x},\cC)$ and $(A-\bar{x})\cap (-\cC_{\delta})$ is weak compact. Then, from the above, there exists $y_{\epsilon}\in\mbox{GHe}(A-\bar{x},\cC)$ such that $\| y_{\epsilon}\|<\epsilon$. Clearly, $x_{\epsilon}:=\bar{x}+y_{\epsilon} \in\mbox{GHe}(A,\cC)$ and $\| \bar{x}-x_{\epsilon}\|<\epsilon$. 
\end{proof}

The following example illustrates the above theorem.
\begin{example} \label{ej_existencia_densidad}
Consider again  Example \ref{ej_existencia_con_debil_compacidad}, let us take $\bar{x}=(0,0)\in \mbox{Min}(A,\cC)$  and $\delta=\frac{\sqrt{2}}{2}$. Then, the conditions of Theorem \ref{thm:arrow_barankin_blackwell_local_approximation} hold, and we have guaranteed that for every $\epsilon>0$ there exists $x_{\epsilon}\in \mbox{GHe}(A,\cC)$ such that $\| \bar{x}-x_{\epsilon}\|<\epsilon$.
Indeed, this is what happens in the example as 
$\underset{x\rightarrow 0}{\lim} \sin(-| x |)=0$
according to (\ref{eq1}) and (\ref{eq2}).
\end{example}

The previous theorem yields the following global density theorem.
\begin{corollary}\label{coro:arrow_barankin_blackwell1}
Let $X$ be a partially ordered normed space, $\cC$ the ordering cone, and $A\subset X$.  Assume that $\cC$ is a closed cone satisfying that $(\cC,\cC_\alpha)$ has the SSP for every $0<\alpha<1$. If for every $x\in \mbox{Min}(A,\cC)$ there exists  $0<\delta_x<1$ such that $A \cap (x-\cC_{\delta_x}) $ is weak compact, then $\mbox{Min}(A,\cC)\subset \overline{\mbox{GHe}(A,\cC)}$.
\end{corollary}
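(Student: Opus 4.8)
The plan is to deduce the statement directly from Theorem \ref{thm:arrow_barankin_blackwell_local_approximation}, which already carries out all the analytic work; the corollary is merely its pointwise-to-global reformulation. First I would fix an arbitrary $x\in\mbox{Min}(A,\cC)$ and verify that the hypotheses of Theorem \ref{thm:arrow_barankin_blackwell_local_approximation} hold at this point with $\bar{x}:=x$ and $\delta:=\delta_x$: the cone $\cC$ is closed and $(\cC,\cC_\alpha)$ has the SSP for every $0<\alpha<1$ by assumption, the section $A\cap(x-\cC_{\delta_x})$ is weak compact by hypothesis, and $x\in\mbox{Min}(A,\cC)$ by the choice of $x$.

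Next, applying Theorem \ref{thm:arrow_barankin_blackwell_local_approximation}, for every $\epsilon>0$ there exists $x_{\epsilon}\in\mbox{GHe}(A,\cC)$ with $\|x-x_{\epsilon}\|<\epsilon$. Since this holds for every $\epsilon>0$, the point $x$ belongs to the norm closure $\overline{\mbox{GHe}(A,\cC)}$. As $x\in\mbox{Min}(A,\cC)$ was arbitrary, this yields the desired inclusion $\mbox{Min}(A,\cC)\subset\overline{\mbox{GHe}(A,\cC)}$.

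There is essentially no obstacle here. The only two points worth recording are: (i) the weak compactness assumption is now permitted to depend on the point $x$, which is harmless because Theorem \ref{thm:arrow_barankin_blackwell_local_approximation} is invoked one point at a time, so each application only uses the corresponding $\delta_x$; and (ii) the approximation furnished by Theorem \ref{thm:arrow_barankin_blackwell_local_approximation} is in the norm, so the closure appearing in the conclusion is the norm closure, consistent with the meaning of $\overline{(\cdot)}$ fixed in Section \ref{Notation}.
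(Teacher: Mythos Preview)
Your proposal is correct and matches the paper's approach exactly: the paper states this corollary immediately after Theorem \ref{thm:arrow_barankin_blackwell_local_approximation} with the sentence ``The previous theorem yields the following global density theorem'' and provides no separate proof. Your pointwise application of Theorem \ref{thm:arrow_barankin_blackwell_local_approximation} with $\bar{x}:=x$ and $\delta:=\delta_x$ is precisely the intended argument.
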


\begin{remark}
The previous corollary extends \cite[Theorem~3.2]{Kasimbeyli2016} by avoiding the requirement of space reflexive, considering a broader set of proper efficient points, and weakening the assumption of norm compactness for each section of the set by only requiring weak compactness for one section of each minimal point.
\end{remark}

Next, we present our final density theorem. It is worth noting that the hypothesis of weak compactness of the set implies its boundedness; however, the convexity assumption is still not necessary.
\begin{theorem}\label{thm:arrow_barankin_blackwell_A_weak_compact}
Let $X$ be a partially ordered normed space, $\cC$ the ordering cone, and $A\subset X$.  Assume that $\cC$ is a closed cone satisfying that $(\cC,\cC_\alpha)$ has the SSP for every $0<\alpha<1$. If $A$ is weak compact, then $\mbox{Min}(A,\cC)\subset \overline{\mbox{GHe}(A,\cC)}$.
\end{theorem}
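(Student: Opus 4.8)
The plan is to reduce this global statement to the local approximation theorem \ref{thm:arrow_barankin_blackwell_local_approximation} (equivalently, to Corollary \ref{coro:arrow_barankin_blackwell1}). The key observation is that the hypothesis here is \emph{stronger} than the hypothesis of that corollary: if $A$ is weak compact, then for any $x\in A$ and any $0<\delta<1$ the section $A\cap(x-\cC_\delta)$ is a \emph{weak closed} subset of the weak compact set $A$, hence itself weak compact. The weak closedness of $A\cap(x-\cC_\delta)$ needs a small argument, since $\cC_\delta$ is in general neither convex nor pointed; I would supply it by appealing to Remark \ref{rem:resumen_existencia_conos}, which under the standing SSP assumption produces a bounded base $B$ of $\cC$ with $\cC_{(B,\delta)}\subset\cC_\delta$ closed and convex (hence weak closed), so that $A\cap(x-\cC_{(B,\delta)})$ is weak compact, which is exactly what is used downstream. (Alternatively one can invoke Proposition \ref{prop:arrow_barankin_blackwell_Henig_dilating_cones} directly.)

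Concretely, I would proceed as follows. Fix $\bar x\in\mbox{Min}(A,\cC)$ and $\epsilon>0$. Pick any $0<\delta<1$. Since $\cC$ is convex and $(\cC,\cC_\delta)$ has the SSP, Remark \ref{rem:resumen_existencia_conos} gives a bounded base $B$ of $\cC$ with $\cC_{(B,\delta)}$ closed and convex and $\cC_{(B,\delta)}\subset\cC_\delta$; in particular $\cC_{(B,\delta)}$ is weak closed, so $(\bar x-\cC_{(B,\delta)})\cap A$ is a weak closed subset of the weak compact set $A$, hence weak compact, and therefore so is the smaller (by $\cC_{(B,\delta)}\subset\cC_\delta$, no—$\subset$ gives the larger) section; the point is simply that $(\bar x-\cC_\delta)\cap A=\bigl((\bar x-\cC_\delta)\cap A\bigr)$ is a weak closed subset of $A$ once one checks $x-\cC_\delta$ is weak closed, which follows because $\cC_\delta=\overline{\mbox{cone}}(\ldots)$ is norm closed by Lemma \ref{lema:propiedades_entornos_conicos}(i) and its relevant section is captured by the convex cone $\cC_{(B,\delta)}$. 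Thus the weak compactness hypothesis ``there exists $0<\delta_{\bar x}<1$ with $A\cap(\bar x-\cC_{\delta_{\bar x}})$ weak compact'' of Theorem \ref{thm:arrow_barankin_blackwell_local_approximation} is satisfied with $\delta_{\bar x}:=\delta$. Since $\bar x\in\mbox{Min}(A,\cC)$ and $\cC$ is closed, Theorem \ref{thm:arrow_barankin_blackwell_local_approximation} yields $x_\epsilon\in\mbox{GHe}(A,\cC)$ with $\|\bar x-x_\epsilon\|<\epsilon$.

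Since $\bar x\in\mbox{Min}(A,\cC)$ and $\epsilon>0$ were arbitrary, every minimal point is a norm limit of points of $\mbox{GHe}(A,\cC)$, i.e. $\mbox{Min}(A,\cC)\subset\overline{\mbox{GHe}(A,\cC)}$. The only genuinely non-formal step is verifying that weak compactness of $A$ transfers to the relevant section $A\cap(\bar x-\cC_\delta)$ despite $\cC_\delta$ not being convex; I expect this to be the main (mild) obstacle, and the clean way around it is to never work with $\cC_\delta$ directly but to pass immediately to the convex Henig dilating cone $\cC_{(B,\delta)}\subset\cC_\delta$ supplied by Remark \ref{rem:resumen_existencia_conos}, exactly as is done in the proofs of Theorems \ref{thm:Existence_He_Cjto_compactbis} and \ref{thm:arrow_barankin_blackwell_local_approximation}. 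Everything else is a direct citation of the local approximation theorem.
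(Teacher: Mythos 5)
Your reduction to Theorem \ref{thm:arrow_barankin_blackwell_local_approximation} has a genuine gap: you must verify its hypothesis that some section $A\cap(\bar x-\cC_\delta)$ is weak compact, and this does not follow from weak compactness of $A$. The cone $\cC_\delta$ is norm closed (Lemma \ref{lema:propiedades_entornos_conicos}(i)) but in general neither convex nor weakly closed, and a norm-closed subset of a weakly compact set need not be weakly compact (the unit sphere inside the unit ball of $\ell^2$ is the standard example). Your attempted repair goes in the wrong direction: Remark \ref{rem:resumen_existencia_conos} gives $\cC_{(B,\delta)}\subset\cC_\delta$, so weak compactness of the \emph{smaller} Henig section $A\cap(\bar x-\cC_{(B,\delta)})$ says nothing about the \emph{larger} section $A\cap(\bar x-\cC_\delta)$ that the hypothesis of Theorem \ref{thm:arrow_barankin_blackwell_local_approximation} (fed into Proposition \ref{prop:arrow_barankin_blackwell_1}) actually demands. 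The sentence ``once one checks $x-\cC_\delta$ is weak closed, which follows because $\cC_\delta$ is norm closed'' is exactly the false step.

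The correct route --- and the one the paper takes --- is to bypass Theorem \ref{thm:arrow_barankin_blackwell_local_approximation} entirely and work with Henig dilating cones from the start: Proposition \ref{prop:arrow_barankin_blackwell_Henig_dilating_cones}, which you mention only parenthetically, is precisely the re-proof of the section-shrinking statement under the hypothesis ``$A$ weak compact''; it produces a convex, weakly closed cone $\cC_{(B(\epsilon),\frac{1}{n_\epsilon})}$ such that $(\cC,\cC_{(B(\epsilon),\frac{1}{n_\epsilon})})$ has the SSP and $A\cap(-\cC_{(B(\epsilon),\frac{1}{n_\epsilon})})\subset A\cap(\epsilon B_X)$. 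Since this cone is weakly closed, that section is weakly compact, and Theorem \ref{thm:Existence_GHe_con_Henig_dilating_cones_no_reflexivos} then yields a point of $\mbox{GHe}(A,\cC)$ inside it, hence within $\epsilon$ of $\cero$; a translation handles a general $\bar x\in\mbox{Min}(A,\cC)$. If you want to keep a reduction-style argument, you would have to restate the local approximation theorem with the weak compactness hypothesis placed on a Henig-dilating-cone section rather than on an $\epsilon$-conic-neighbourhood section --- which is, in substance, what Proposition \ref{prop:arrow_barankin_blackwell_Henig_dilating_cones} combined with Theorem \ref{thm:Existence_GHe_con_Henig_dilating_cones_no_reflexivos} accomplishes.
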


\begin{proof}
Fix $\epsilon>0$. We first consider the case $\cero \in \mbox{Min}(A,\cC)$. We will check that there exists $x_{\epsilon}\in {\mbox{GHe}(A,\cC)}$ such that $\|x_{\epsilon}\|<\epsilon$. By Proposition \ref{prop:arrow_barankin_blackwell_Henig_dilating_cones}, there exist $n_{\epsilon} \in \mathbb{N}$ and a bounded base $B_{\epsilon}$ of $\cC$ such that $(\cC,\cC_{(B_{\epsilon},\frac{1}{n_{\epsilon}})})$ has the SSP and $A \cap ( -\cC_{(B_{\epsilon},\frac{1}{n_{\epsilon}})}) \subset A \cap (\epsilon B_X)$. Since $\cC_{(B_{\epsilon},\frac{1}{n_{\epsilon}})}$ is weak closed and $A$ weak compact, it follows that  $A \cap ( -\cC_{(B_{\epsilon},\frac{1}{n_{\epsilon}})})$ is weak compact. By Theorem \ref{thm:Existence_GHe_con_Henig_dilating_cones_no_reflexivos}, there exists $x_{\epsilon}\in A \cap ( -\cC_{(B_{\epsilon},\frac{1}{n_{\epsilon}})})$ such that $x_{\epsilon}\in {\mbox{GHe}(A,\cC)}$. Then  $x_{\epsilon}\in A \cap (\epsilon B_X)$ implying $\|x_{\epsilon}\|<\epsilon$.  Clearly $\cero \in \overline{\mbox{GHe}(A,\cC)}$. To finish the proof assume that $x \in \mbox{Min}(A,\cC)$ and $x\not =\cero$.  Then $\cero\in\mbox{Min}(A-x,\cC)$ and $A-x$ is weak compact. Then, from the above, there exists $y_{\epsilon}\in\mbox{GHe}(A-x,\cC)$ such that $\| y_{\epsilon}\|<\epsilon$. Clearly, $x_{\epsilon}:=x+y_{\epsilon} \in\mbox{GHe}(A,\cC)$ and $\| x-x_{\epsilon}\|<\epsilon$.  So $x \in \overline{\mbox{GHe}(A,\cC)}$. 
\end{proof}

The following example illustrates the above theorem.
\begin{example} \label{ej_existencia_densidad}
Consider again Example \ref{ej_existencia_con_debil_compacidad}, we will trim the set $A$ there to make it compact (and then weak compact) here. Then, we  define the set
$A'=A \cap \{ y\leq 1\} =\{(x,y)\in \mathbb{R}^2: -\frac{\pi}{2}\leq x \leq \frac{\pi}{2},1\geq  y\geq \sin(-| x |) \},$. Then, for $A'$ the conditions of Theorem \ref{thm:arrow_barankin_blackwell_A_weak_compact} hold and then we have guaranteed that  $\mbox{Min}(A,\cC)\subset \overline{\mbox{GHe}(A,\cC)}$. And this is what happens in the example where as $\text{Min}(A,\cC)=\text{Min}(A',\cC)$ and  $\text{GHe}(A,\cC)=\text{GHe}(A',\cC)$, from (\ref{eq1}) and (\ref{eq2}) we directly obtain that
$\mbox{Min}(A',\cC)\subset \overline{\mbox{GHe}(A',\cC)}$.

\end{example}

% Let us consider again $X=\R^2$ ordered by the usual cone $\cC:=\{(x_1,x_2)\in \R^2:x_i\geq 0\}$. As was mentioned before the statement of Theorem \ref{thm:tma_separacion_epsilon_conic_neighbourhood},  the pair $(\cC,\cC_{\epsilon})$ has the SSP when we consider any of the equivalent norms  $\|\cdot \|_1$, $\|\cdot \|_2$, $\|\cdot \|_{\infty}$. As $X$ has finite dimension,  the set 
% $A \cap (x-\cC_{\delta}) $ will be weak compact as long as it is closed and bounded.  So,  the hypothesis of our main results are satisfied in this situation. To find an example satisfying that  $A \cap (x-\cC_{\delta}) $ is weak compact in infinite dimension, we invoke \cite[Example 2]{Kasimbeyli2015}.  If we consider $X=c_0$ the space of real sequences converging to zero with $\|\cdot \|_{\infty}$, and ordered by the usual cone $\cC:=\{(x_i)_i\in c_0:x_i\geq 0\}$, then the set $S_x:=\{y\in c_0\colon |y_i|\leq |x_i|,\, \forall i\}$ is compact for every $x \in X$.
%So, every set of the form $S_y \cap (x-\cC_{\delta}) $ is weak compact. 

We conclude our work with a question that sets a direction for future research:\begin{problem}
Can the SSP condition be relaxed  in the statement of Theorems \ref{thm:Existence_He_Hartley_compact_nuevo}, \ref{thm:arrow_barankin_blackwell_local_approximation}, or \ref{thm:arrow_barankin_blackwell_A_weak_compact}?
\end{problem}
This question suggests exploring whether the SSP condition can be weakened while still obtaining similar results and conclusions. By relaxing this condition, it may be possible to extend the applicability of the theorems to a broader class of cones and sets. Further investigation in this direction could lead to new insights and generalizations in the field.
\section*{Acknowledgement(s)}
We would like to express our gratitude to the referee for their valuable suggestions, which have significantly improved this work.
%An unnumbered section, e.g.\ \verb"\section*{Acknowledgements}", may be used for thanks, etc.\ if required and included \emph{in the non-anonymous version} before any Notes or References.
\section*{Funding}
Fernando Garc\'ia-Casta\~no and M. A. Melguizo-Padial acknowledge the financial support from the Spanish Ministry of Science, Innovation and Universities (MCIN/AEI) under project PID2021-122126NB-C32,  funded by MICIU/AEI /10.13039/501100011033/ and FEDER A way of making Europe.

%the European Regional Development Fund (ERDF) under the slogan "A way of making Europe".

%% The Appendices part is started with the command \appendix;
%% appendix sections are then done as normal sections
%% \appendix

%% \section{}
%% \label{}

%% If you have bibdatabase file and want bibtex to generate the
%% bibitems, please use
%%
 % \bibliographystyle{elsarticle-num} %%Tuve que quitarlo porque no ordenaba alfabeticamente, puse lo siguiente que vi en la web buscando
  \bibliographystyle{abbrvnat}
  \bibliography{references}
\end{document}